
\documentclass{amsart}
%%%%%%%%%%%%%%%%%%%%%%%%%%%%%%%%%%%%%%%%%%%%%%%%%%%%%%%%%%%%%%%%%%%%%%%%%%%%%%%%%%%%%%%%%%%%%%%%%%%%%%%%%%%%%%%%%%%%%%%%%%%%%%%%%%%%%%%%%%%%%%%%%%%%%%%%%%%%%%%%%%%%%%%%%%%%%%%%%%%%%%%%%%%%%%%%%%%%%%%%%%%%%%%%%%%%%%%%%%%%%%%%%%%%%%%%%%%%%%%%%%%%%%%%%%%%
\usepackage{amssymb}
\usepackage{amsmath}
\usepackage{amsfonts}

\setcounter{MaxMatrixCols}{10}
%TCIDATA{OutputFilter=LATEX.DLL}
%TCIDATA{Version=5.50.0.2953}
%TCIDATA{<META NAME="SaveForMode" CONTENT="1">}
%TCIDATA{BibliographyScheme=Manual}
%TCIDATA{Created=Friday, July 29, 2022 13:52:35}
%TCIDATA{LastRevised=Wednesday, January 18, 2023 19:53:12}
%TCIDATA{<META NAME="GraphicsSave" CONTENT="32">}
%TCIDATA{<META NAME="DocumentShell" CONTENT="Articles\SW\AMS Journal Article">}
%TCIDATA{Language=American English}
%TCIDATA{CSTFile=amsartci.cst}

\newtheorem{theorem}{Theorem}[section]

\newtheorem{lemma}[theorem]{Lemma}
\newtheorem{definition}[theorem]{Definition}
\newtheorem{proposition}[theorem]{Proposition}
\newtheorem{remark}[theorem]{Remark}

\numberwithin{theorem}{section}
\newtheorem{acknowledgement}{Acknowledgement}

\begin{document}
\title[Boundary representations of locally $C^{\ast }$-algebras]{Local
boundary representations for local operator systems }
\author{Maria Joi\c{t}a}
\address{Department of Mathematics, Faculty of Applied Sciences, University
Politehnica of Bucharest, 313 Spl. Independentei, 060042, Bucharest, Romania
and, Simion Stoilow Institute of Mathematics of the Romanian Academy, P.O.
Box 1-764, 014700, Bucharest, Romania}
\email{mjoita@fmi.unibuc.ro and maria.joita@mathem.pub.ro}
\urladdr{http://sites.google.com/a/g.unibuc.ro/maria-joita/}
\subjclass[2000]{46L07; 46L05}
\keywords{locally $C^{\ast }$-algebras, quantized domains, local completely
positive maps, local operator systems, local boundary representations}
\thanks{This work was supported by a grant of the Ministry of Research,
Innovation and Digitization, CNCS/CCCDI--UEFISCDI, project number
PN-III-P4-ID-PCE-2020-0458, within PNCDI III}

\begin{abstract}
In this paper, we show that the local boundary representations of a local
operator system in a Fr\'{e}chet locally $C^{\ast }$-algebra on quantized Fr%
\'{e}chet domains introduced by Arunkumar \cite{Ar} are in fact local
boundary representations on Hilbert spaces. Thus, the study of local boundary
representations for local operator systems in Fr\'{e}chet locally $C^{\ast }$%
-algebras on quantized Fr\'{e}chet domains is reduced to the study of
boundary representations for operator systems.
\end{abstract}

\maketitle

\section{Introduction}

An operator system is a self-adjoint linear subspace $\mathcal{S}$ of a
unital $C^{\ast }$-algebra that contains the unit. A boundary representation
for $\mathcal{S}$ is an irreducible representation of $C^{\ast }(\mathcal{S}%
) $, the $C^{\ast }$-algebra generated by $\mathcal{S}$, whose restriction
to $\mathcal{S}$ has the unique extension property. These are the objects
that generalize points of the Choquet boundary of a function system in $C(X)$%
, the $C^{\ast }$-algebra of all continuous complex values functions on a
Hausdorff compact space $X$. The notion of boundary representation for an
operator system was introduced by Arveson \cite{A} and studied extensively
by him \cite{A1,A2,A3}, Davidson and Kennedy \cite{DK}, Muhly and Solel \cite%
{MS} and others.

Effros and Webster \cite{EW} initiated a study of the locally convex version
of operator spaces called the local operator spaces. Lately, there has been
extensive research on the local operator spaces \cite{Ar,
BKL1,BKL2,BGK,D,DA,JM22}. Locally $C^{\ast }$-algebras are generalizations
of $C^{\ast }$-algebras, the topology on a locally $C^{\ast }$-algebra is
defined by a separating family of $C^{\ast }$-seminorms instead of a $%
C^{\ast }$-norm. A locally $C^{\ast }$-algebra can be identified with a
certain $\ast $-algebra of unbounded linear operators on a Hilbert space
\cite{D,I}. Dosiev \cite{D} realized local operator spaces as subspaces of
the locally $C^{\ast }$-algebra $C^{\ast }(\mathcal{D}_{\mathcal{E}})$ of
unbounded operators on a quantized domain $\{\mathcal{H};\mathcal{E};%
\mathcal{D}_{\mathcal{E}}\}$. Arunkumar \cite{Ar} introduced the notion of
local boundary representation for a local operator system and investigated
certain properties of them. A local boundary representation for a local
operator system $\mathcal{S}$ is an irreducible local representation $\pi $
of the locally $C^{\ast }$-algebra generated by $\mathcal{S}$ on a quantized
domain $\{\mathcal{H};\mathcal{E};\mathcal{D}_{\mathcal{E}}\}$ such that it
is the unique extension of the unital local $\mathcal{CP}$(completely
positive)-map $\left. \pi \right\vert _{\mathcal{S}}$ to the locally $%
C^{\ast }$-algebra generated by $\mathcal{S}$. Arunkumar \cite{Ar} limited his research on local boundary representations for local operator systems to the case of local operator systems in Fr\'{e}chet locally $C^{\ast }$-algebras because an  Arveson's extension type theorem in the context of locally $C^{\ast }$-algebras is only known for
local $\mathcal{CP}$-maps on quantized Fr\'{e}chet domains.

A local representation $\pi :\mathcal{A\rightarrow }C^{\ast }\left( \mathcal{%
D}_{\mathcal{E}}\right) $ is irreducible, in the sense of Arunkumar \cite [Definition 4.2]{Ar} if $\pi \left( \mathcal{A}\right)
^{\prime }\cap C^{\ast }\left( \mathcal{D}_{\mathcal{E}}\right) =\mathbb{C}%
I_{\mathcal{D}_{\mathcal{E}}}$, where $\pi \left( \mathcal{A}\right)
^{\prime }=\{T\in B\left( \mathcal{H}\right) ;T\pi \left( a\right) \subseteq
\pi \left( a\right) T$, for all $a\in \mathcal{A\}}$.
Unfortunately, the locally $C^{\ast }$-algebra $C^{\ast }\left( \mathcal{D}_{%
\mathcal{E}}\right) $ does not play the role of $B(\mathcal{H})$ in the
locally convex space theory. We show that the irreducible local
representations of a locally $C^{\ast }$-algebra on quantized Fr\'{e}chet
domains are in fact local representations on Hilbert spaces. Thus, the study
of local boundary representations for local operator systems is reduced to
the study of boundary representations for operator systems.

\section{Preliminaries}

\subsection{Locally $C^{\ast }$-algebras}

A \textit{locally }$C^{\ast }$\textit{-algebra }is a complete Hausdorff
complex topological $\ast $-algebra $\mathcal{A}$ whose topology is
determined by an upward filtered family $\{p_{\lambda }\}_{\lambda \in
\Lambda }\ $of $C^{\ast }$-seminorms defined on $\mathcal{A}$ (that means,
if $\lambda _{1}\leq \lambda _{2},$ then $p_{\lambda _{1}}\left( a\right)
\leq $ $p_{\lambda _{2}}\left( a\right) $ for all $a\in \mathcal{A}$). A
\textit{Fr\'{e}chet locally }$C^{\ast }$\textit{-algebra }is a locally $%
C^{\ast }$-algebra whose topology is determined by a countable family of $%
C^{\ast }$-seminorms.

A locally $C^{\ast }$-algebra $\mathcal{A}$ can be realized as a projective
limit of an inverse family of $C^{\ast }$-algebras. If $\mathcal{A}$ is a
locally $C^{\ast }$-algebra\textit{\ }with the topology determined by the
family of $C^{\ast }$-seminorms $\{p_{\lambda }\}_{\lambda \in \Lambda }$,
for each $\lambda \in \Lambda $, $\mathcal{I}_{\lambda }=\{a\in \mathcal{A}%
;p_{\lambda }\left( a\right) =0\}$ is a closed two sided $\ast $-ideal in $%
\mathcal{A}$ and $\mathcal{A}_{\lambda }=\mathcal{A}/\mathcal{I}_{\lambda }$
is a $C^{\ast }$-algebra with respect to the $C^{\ast }$-norm induced by $%
p_{\lambda }$. The canonical quotient $\ast $-morphism from $\mathcal{A}$ to
$\mathcal{A}_{\lambda }$ is denoted by $\pi _{\lambda }^{\mathcal{A}}$. For
each $\lambda _{1},\lambda _{2}\in \Lambda $ with $\lambda _{1}\leq \lambda
_{2}$, there is a canonical surjective $\ast $-morphism $\pi _{\lambda
_{2}\lambda _{1}}^{\mathcal{A}}:$ $\mathcal{A}_{\lambda _{2}}\rightarrow
\mathcal{A}_{\lambda _{1}}$, defined by $\pi _{\lambda _{2}\lambda _{1}}^{%
\mathcal{A}}\left( a+\mathcal{I}_{\lambda _{2}}\right) =a+\mathcal{I}%
_{\lambda _{1}}$ for $a\in \mathcal{A}$. Then, $\{\mathcal{A}_{\lambda },\pi
_{\lambda _{2}\lambda _{1}}^{\mathcal{A}}\}_{\lambda _{1}\leq \lambda
_{2},\lambda,\lambda _{1},\lambda _{2}\in \Lambda }$\ forms an inverse system of $%
C^{\ast }$-algebras, since $\pi _{\lambda _{1}}^{\mathcal{A}}=$ $\pi
_{\lambda _{2}\lambda _{1}}^{\mathcal{A}}\circ \pi _{\lambda _{2}}^{\mathcal{%
A}}$ whenever $\lambda _{1}\leq \lambda _{2}$. The projective limit%
\begin{equation*}
\lim\limits_{\underset{\lambda }{\leftarrow }}\mathcal{A}_{\lambda
}:=\{\left( a_{\lambda }\right) _{\lambda \in \Lambda }\in
\prod\limits_{\lambda \in \Lambda }\mathcal{A}_{\lambda };\pi _{\lambda
_{2}\lambda _{1}}^{\mathcal{A}}\left( a_{\lambda _{2}}\right) =a_{\lambda
_{1}}\text{ whenever }\lambda _{1}\leq \lambda _{2},\lambda _{1},\lambda
_{2}\in \Lambda \}
\end{equation*}%
of the inverse system of $C^{\ast }$-algebras $\{\mathcal{A}_{\lambda },\pi
_{\lambda _{2}\lambda _{1}}^{\mathcal{A}}\}_{\lambda _{1}\leq \lambda
_{2},\lambda,\lambda _{1},\lambda _{2}\in \Lambda }$ is a locally $C^{\ast }$%
-algebra that can be identified with $\mathcal{A}$ by the map $a\mapsto
\left( \pi _{\lambda }^{\mathcal{A}}\left( a\right) \right) _{\lambda \in
\Lambda }$.

\subsection{Local positive elements}

An element $a\in \mathcal{A}$ is \textit{self-adjoint} if $a^{\ast }=a$ and
it is\textit{\ positive} if $a=b^{\ast }b$ for some $b\in \mathcal{A}.$

An element $a\in \mathcal{A}$ is called \textit{local self-adjoint} if $%
a=a^{\ast }+c$, where $c\in \mathcal{A}$ such that $p_{\lambda }\left(
c\right) =0$ for some $\lambda \in \Lambda $, and we call $a$ as $\lambda $%
\textit{-self-adjoint,} and \textit{local positive} if $a=b^{\ast }b+c$
where $b,c\in $ $\mathcal{A}$ such that $p_{\lambda }\left( c\right) =0\ $
for some $\lambda \in \Lambda $, we call $a$ as $\lambda $\textit{-positive }%
and write\textit{\ }$a\geq _{\lambda }0$\textit{. }We\textit{\ }write $%
a=_{\lambda }0$ whenever $p_{\lambda }\left( a\right) =0$. Note that $a\in
\mathcal{A}$ is local self-adjoint if and only if there is $\lambda \in
\Lambda $ such that $\pi _{\lambda }^{\mathcal{A}}\left( a\right) $ is self
adjoint in $\mathcal{A}_{\lambda }$ and $a\in \mathcal{A}$ is local positive
if and only if there is $\lambda \in \Lambda $ such that $\pi _{\lambda }^{%
\mathcal{A}}\left( a\right) $ is positive in $\mathcal{A}_{\lambda }.$

\subsection{Quantized domains}

Throughout the paper, $\mathcal{H}$ is a complex Hilbert space and $B(%
\mathcal{H})$ is the $\ast $-algebra of all bounded linear operators on a
Hilbert space $\mathcal{H}$.

Let $(\Upsilon ,\leq )$ be a directed poset. A \textit{quantized domain in a
Hilbert space} $\mathcal{H}$ is a triple $\{\mathcal{H};\mathcal{E};\mathcal{%
D}_{\mathcal{E}}\}$, where $\mathcal{E}=\{\mathcal{H}_{\iota }\}_{\iota \in
\Upsilon }$ is an upward filtered family of closed nonzero subspaces such
that the union space $\mathcal{D}_{\mathcal{E}}=\bigcup\limits_{\iota \in
\Upsilon }\mathcal{H}_{\iota }$ is dense in $\mathcal{H}$ \cite{D}. If $%
\mathcal{E}$ is a countable family of closed nonzero subspaces, we say that $%
\{\mathcal{H};\mathcal{E};\mathcal{D}_{\mathcal{E}}\}$ is a \textit{%
quantized Fr\'{e}chet domain} in\textit{\ }$\mathcal{H}$.

Let $\mathcal{E}=\{\mathcal{H}_{\iota }\}_{\iota \in \Upsilon }$ be a
quantized domain in a Hilbert space $\mathcal{H}$.$\ $For each $\iota \in
\Upsilon \ $and for each $n\geq 1$, we put $\mathcal{H}^{\oplus n}:=\underset%
{n}{\underbrace{\mathcal{H}\oplus ...\oplus \mathcal{H}}},\mathcal{H}_{\iota
}^{\oplus n}:=\underset{n}{\underbrace{\mathcal{H}_{\iota }\oplus ...\oplus
\mathcal{H}_{\iota }}},$ $\mathcal{E}^{\oplus n}:=\{\mathcal{H}_{\iota
}^{\oplus n}\}_{\iota \in \Upsilon }$ and $\mathcal{D}_{\mathcal{E}^{\oplus
n}}:=\bigcup\limits_{\iota \in \Upsilon }\mathcal{H}_{\iota }^{\oplus n}$.
Then $\{\mathcal{H}^{\oplus n};\mathcal{E}^{\oplus n};\mathcal{D}_{\mathcal{E%
}^{\oplus n}}\}\ $is a quantized domain in the Hilbert space $\mathcal{H}%
^{\oplus n}\ $.

The quantized family $\mathcal{E}=\{\mathcal{H}_{\iota }\}_{\iota \in
\Upsilon }$ determines an upward filtered family $\{P_{\iota }\}_{\iota \in
\Upsilon }$ of projections in $B(\mathcal{H})$, where $P_{\iota }$ is a
projection onto $\mathcal{H}_{\iota }$.

Let
\begin{equation*}
C^{\ast }(\mathcal{D}_{\mathcal{E}}):=\{T\in \mathcal{L}(\mathcal{D}_{%
\mathcal{E}});TP_{\iota }=P_{\iota }TP_{\iota }\in B(\mathcal{H})\text{ and }%
P_{\iota }T\subseteq TP_{\iota }\text{ for all }\iota \in \Upsilon \}
\end{equation*}%
where $\mathcal{L}(\mathcal{D}_{\mathcal{E}})$ is the collection of all
linear operators on $\mathcal{D}_{\mathcal{E}}$. Let $T\in \mathcal{L}(%
\mathcal{D}_{\mathcal{E}})$. Then $T\in C^{\ast }(\mathcal{D}_{\mathcal{E}})$
if and only if $T(\mathcal{H}_{\iota })\subseteq \mathcal{H}_{\iota },T(%
\mathcal{H}_{\iota }^{\bot }\cap \mathcal{D}_{\mathcal{E}})\subseteq
\mathcal{H}_{\iota }^{\bot }\cap \mathcal{D}_{\mathcal{E}}$ and $\left.
T\right\vert _{\mathcal{H}_{\iota }}\in B(\mathcal{H}_{\iota })$ for all $%
\iota \in \Upsilon .\ $

If $T\in C^{\ast }(\mathcal{D}_{\mathcal{E}})$, then $\mathcal{D}_{\mathcal{E%
}}$ $\subseteq $ dom$(T^{\bigstar })$, where $T^{\bigstar }$ is the adjoint of $%
T,$ and $\left. T^{\bigstar }\right\vert _{\mathcal{D}_{\mathcal{E}}}\in
C^{\ast }(\mathcal{D}_{\mathcal{E}})$. Let $T^{\ast }=\left. T^{\bigstar
}\right\vert _{\mathcal{D}_{\mathcal{E}}}$. It is easy to check that $%
C^{\ast }(\mathcal{D}_{\mathcal{E}})$ equipped with this involution is a
unital $\ast $-algebra. For each $\iota \in \Upsilon $, the map $\left\Vert
\cdot \right\Vert _{\iota }:C^{\ast }(\mathcal{D}_{\mathcal{E}})\rightarrow
\lbrack 0,\infty )$,
\begin{equation*}
\left\Vert T\right\Vert _{\iota }=\left\Vert \left. T\right\vert _{\mathcal{H%
}_{\iota }}\right\Vert =\sup \{\left\Vert T\left( \xi \right) \right\Vert
;\xi \in \mathcal{H}_{\iota },\left\Vert \xi \right\Vert \leq 1\}
\end{equation*}%
is a $C^{\ast }$-seminorm on $C^{\ast }(\mathcal{D}_{\mathcal{E}})$.
Therefore, $C^{\ast }(\mathcal{D}_{\mathcal{E}})$ is a locally $C^{\ast }$%
-algebra with respect to the family of $C^{\ast }$-seminorms $\{\left\Vert
\cdot \right\Vert _{\iota }\}_{\iota \in \Upsilon }$. If $\mathcal{E}=\{%
\mathcal{H}\}$, then $C^{\ast }(\mathcal{D}_{\mathcal{E}})=B(\mathcal{H})$.

A \textit{local representation} of a locally $C^{\ast }$-algebra $\mathcal{A}
$, whose topology is defined by the family of $C^{\ast }$-seminorms $%
\{p_{\lambda }\}_{\lambda \in \Lambda },$ on a quantized domain $\{\mathcal{H%
};\mathcal{E};\mathcal{D}_{\mathcal{E}}\}$ with $\mathcal{E}=\{\mathcal{H}%
_{\iota }\}_{\iota \in \Upsilon }$ is a $\ast $-morphism $\pi :\mathcal{%
A\rightarrow }C^{\ast }(\mathcal{D}_{\mathcal{E}})$ with the property that
for each $\iota \in \Upsilon $, there exists $\lambda \in \Lambda $ such
that $\left\Vert \pi \left( a\right) \right\Vert _{\iota }\leq p_{\lambda
}\left( a\right) $ for all $a\in \mathcal{A}$. Given a locally $C^{\ast }$%
-algebra $\mathcal{A}$, whose topology is defined by the family of $C^{\ast
} $-seminorms $\{p_{\lambda }\}_{\lambda \in \Lambda }$, there exist a
quantized domain $\{\mathcal{H};\mathcal{E};\mathcal{D}_{\mathcal{E}}\}$
with $\mathcal{E}=\{\mathcal{H}_{\lambda }\}_{\lambda \in \Lambda }$ and an
local isometric representation $\pi :\mathcal{A\rightarrow }C^{\ast }(%
\mathcal{D}_{\mathcal{E}})$, that is, for each $\lambda \in \Lambda
,\left\Vert \pi \left( a\right) \right\Vert _{\lambda }=p_{\lambda }\left(
a\right) $ for all $a\in \mathcal{A}$, (see \cite[Theorem 7.2]{D} and \cite[%
Theorem 5.1]{I}). This result can be regarded as a unbounded analog of
Gelfand-Naimark theorem.

Let $\{\mathcal{H};\mathcal{E}=\{\mathcal{H}_{\iota }\}_{\iota \in \Upsilon
};\mathcal{D}_{\mathcal{E}}\}$ be a quantized domain in $\mathcal{H}$. If
the family $\{P_{\iota }\}_{\iota \in \Upsilon }$ of projections in $B(%
\mathcal{H})$ induced by $\mathcal{E}=\{\mathcal{H}_{\iota }\}_{\iota \in
\Upsilon }$ is a mutually commuting family projections in $B(\mathcal{H})$,
we say that $\{\mathcal{H};\mathcal{E}=\{\mathcal{H}_{\iota }\}_{\iota \in
\Upsilon };\mathcal{D}_{\mathcal{E}}\}$ is a\textit{\ commutative domain} in
$\mathcal{H},$ and $C^{\ast }(\mathcal{D}_{\mathcal{E}})$ is a local convex
version of $B(\mathcal{H})$ \cite{D1}. The center $\mathcal{Z}(C^{\ast }(%
\mathcal{D}_{\mathcal{E}}))$ of $C^{\ast }(\mathcal{D}_{\mathcal{E}})$ is
the local von Neumann algebra generated by the family of projections $%
\{P_{\iota }\}_{\iota \in \Upsilon }$. Given a quantized domain $\{\mathcal{H%
};\mathcal{E};\mathcal{D}_{\mathcal{E}}\}$, there exists a commutative
domain $\{\mathcal{H};\mathcal{E}_{c};\mathcal{D}_{\mathcal{E}_{c}}\}$ such
that $C^{\ast }(\mathcal{D}_{\mathcal{E}})$ identifies with a locally $%
C^{\ast }$-subalgebra in $C^{\ast }(\mathcal{D}_{\mathcal{E}_{c}})$ $\ $\cite%
[Proposition 3.1]{D1}.

\subsection{Local completely positive maps}

For each $n\geq 1,$ $M_{n}(\mathcal{A})$ denotes the collection of all
matrices of order $n$ with elements in $\mathcal{A}$. Note that $M_{n}(%
\mathcal{A})$ is a locally $C^{\ast }$-algebra, the associated family of $%
C^{\ast }$-seminorms being denoted by $\{p_{\lambda }^{n}\}_{\lambda \in
\Lambda }$, and $M_{n}(C^{\ast }(\mathcal{D}_{\mathcal{E}}))$ can be
identified with $C^{\ast }(\mathcal{D}_{\mathcal{E}^{\oplus n}}).$

For each $n\in \mathbb{N}$, the $n$-amplification of a linear map $\varphi :%
\mathcal{A}\rightarrow C^{\ast }(\mathcal{D}_{\mathcal{E}})$ is the map $%
\varphi ^{\left( n\right) }:M_{n}(\mathcal{A})$ $\rightarrow $ $C^{\ast }(%
\mathcal{D}_{\mathcal{E}^{\oplus n}})$ defined by
\begin{equation*}
\varphi ^{\left( n\right) }\left( \left[ a_{ij}\right] _{i,j=1}^{n}\right) =%
\left[ \varphi \left( a_{ij}\right) \right] _{i,j=1}^{n}
\end{equation*}%
for all $\left[ a_{ij}\right] _{i,j=1}^{n}\in M_{n}(\mathcal{A})$ .

A linear map $\varphi :\mathcal{A}\rightarrow C^{\ast }(\mathcal{D}_{%
\mathcal{E}})$ is called :

\begin{enumerate}
\item \textit{positive} if $\varphi \left( a\right) \ $is positive in $%
C^{\ast }(\mathcal{D}_{\mathcal{E}})\ $whenever $a\ $is positive in $%
\mathcal{A};$

\item \textit{completely positive }if $\varphi ^{\left( n\right) }\left( %
\left[ a_{ij}\right] _{i,j=1}^{n}\right) $ is positive in $C^{\ast }(%
\mathcal{D}_{\mathcal{E}^{\oplus n}})$ whenever $\left[ a_{ij}\right] _{i,j=1}^{n}$
is positive in $M_{n}(\mathcal{A})$ for all $n\geq 1;$

\item \textit{local positive} if for each $\iota \in \Upsilon $, there is $%
\lambda \in \Lambda $ such that $\left. \varphi \left( a\right) \right\vert
_{\mathcal{H}_{\iota }}\ $is positive in $B\left( \mathcal{H}_{\iota
}\right) \ $ whenever $ a \geq _{\lambda} 0$
and $\left. \varphi \left( a\right) \right\vert _{\mathcal{H}_{\iota }}=0\ $%
whenever $a =_ {\lambda}0;$

\item \textit{local completely positive (local }$\mathcal{CP}$\textit{) }if
for each $\iota \in \Upsilon $, there is $\lambda \in \Lambda $ such that $%
\left. \varphi ^{\left( n\right) }\left( \left[ a_{ij}\right]
_{i,j=1}^{n}\right) \right\vert _{\mathcal{H}_{\iota }^{\oplus n}}\ $is
positive in $B\left( \mathcal{H}_{\iota }^{\oplus n}\right) $ whenever $\left [\left( a_{ij}\right) \right]_{i,j=1}^{n}\geq _\lambda 0$ and $\left. \varphi ^{\left( n\right) }\left( \left[
a_{ij}\right] _{i,j=1}^{n}\right) \right\vert _{\mathcal{H}_{\iota }^{\oplus
n}}=0\ \ $whenever $\left[ \left( a_{ij}\right) %
\right] _{i,j=1}^{n}=_\lambda 0$,$\ $for all $n\geq 1$.
\end{enumerate}

\section{Irreducible local representations, local boundary representations
and pure local $\mathcal{CP}$-maps}

\subsection{Irreducible local representations}

Let $\mathcal{A}$ be a unital locally $C^{\ast }$-algebra with the topology
defined by the family of $C^{\ast }$-seminorms $\{p_{\lambda }\}_{\lambda
\in \Lambda }$ and $\{\mathcal{H},\mathcal{E}=\{\mathcal{H}_{\iota
}\}_{\iota \in \Upsilon },\mathcal{D}_{\mathcal{E}}\}$ be a quantized domain
in a Hilbert space $\mathcal{H}.$

\begin{definition}
\cite[Definition 4.2] {Ar} A local representation $\pi :\mathcal{A\rightarrow
}C^{\ast }\left( \mathcal{D}_{\mathcal{E}}\right) $ is irreducible if $\pi
\left( \mathcal{A}\right) ^{\prime }\cap C^{\ast }\left( \mathcal{D}_{%
\mathcal{E}}\right) =\mathbb{C}I_{\mathcal{D}_{\mathcal{E}}}$, where $\pi
\left( \mathcal{A}\right) ^{\prime }=\{T\in B\left( \mathcal{H}\right) ;T\pi
\left( a\right) \subseteq \pi \left( a\right) T$, for all $a\in \mathcal{A\}}
$.{}
\end{definition}

\begin{proposition}
\label{1} Let $\{\mathcal{H},\mathcal{E}=\{\mathcal{H}_{\iota }\}_{\iota \in
\Upsilon },\mathcal{D}_{\mathcal{E}}\}$ be a commutative domain and $\pi :%
\mathcal{A\rightarrow }C^{\ast }\left( \mathcal{D}_{\mathcal{E}}\right) $ be
a local representation. If $\pi $ is irreducible, then $\mathcal{H}_{\iota }=%
\mathcal{H}$ for all $\iota \in \Upsilon $ and $C^{\ast }\left( \mathcal{D}_{%
\mathcal{E}}\right) =B\left( \mathcal{H}\right) $.
\end{proposition}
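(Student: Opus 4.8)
The plan is to exploit the commutativity hypothesis through the structure of the center of $C^{\ast }(\mathcal{D}_{\mathcal{E}})$ recalled in the preliminaries. For a commutative domain the projections $\{P_{\iota }\}_{\iota \in \Upsilon }$ mutually commute, and the center $\mathcal{Z}(C^{\ast }(\mathcal{D}_{\mathcal{E}}))$ is the local von Neumann algebra they generate; in particular each $P_{\iota }$, restricted to $\mathcal{D}_{\mathcal{E}}$, is a central element of $C^{\ast }(\mathcal{D}_{\mathcal{E}})$. The strategy is to feed each $P_{\iota }$ into the irreducibility condition and read off that it must be trivial.

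First I would check that $P_{\iota }\in \pi (\mathcal{A})^{\prime }\cap C^{\ast }(\mathcal{D}_{\mathcal{E}})$ for every $\iota \in \Upsilon $. Commutativity of the family $\{P_{\kappa }\}$ gives $P_{\iota }(\mathcal{H}_{\kappa })\subseteq \mathcal{H}_{\kappa }$ and $P_{\iota }(\mathcal{H}_{\kappa }^{\bot }\cap \mathcal{D}_{\mathcal{E}})\subseteq \mathcal{H}_{\kappa }^{\bot }\cap \mathcal{D}_{\mathcal{E}}$ for all $\kappa $, so $\left. P_{\iota }\right\vert _{\mathcal{D}_{\mathcal{E}}}\in C^{\ast }(\mathcal{D}_{\mathcal{E}})$ by the characterization of that algebra. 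Since $\pi (a)\in C^{\ast }(\mathcal{D}_{\mathcal{E}})$ for every $a\in \mathcal{A}$ and $P_{\iota }$ is central, we have $P_{\iota }\pi (a)=\pi (a)P_{\iota }$ as operators on $\mathcal{D}_{\mathcal{E}}$; because $\mathcal{D}_{\mathcal{E}}=\mathrm{dom}(\pi (a))$ and $P_{\iota }$ leaves $\mathcal{D}_{\mathcal{E}}$ invariant, this equality on the common domain yields the operator inclusion $P_{\iota }\pi (a)\subseteq \pi (a)P_{\iota }$, which is exactly the membership $P_{\iota }\in \pi (\mathcal{A})^{\prime }$ in the sense of the definition.

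Next I would apply irreducibility: $P_{\iota }=c_{\iota }I_{\mathcal{D}_{\mathcal{E}}}$ for some scalar $c_{\iota }\in \mathbb{C}$. As $P_{\iota }$ is a projection, $c_{\iota }\in \{0,1\}$, and since $\mathcal{H}_{\iota }$ is nonzero by the definition of a quantized domain, $c_{\iota }=1$. Hence $P_{\iota }$ acts as the identity on $\mathcal{D}_{\mathcal{E}}$, so every $\xi \in \mathcal{D}_{\mathcal{E}}$ satisfies $P_{\iota }\xi =\xi $, i.e. $\xi \in \mathcal{H}_{\iota }$; thus $\mathcal{D}_{\mathcal{E}}\subseteq \mathcal{H}_{\iota }$. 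Since $\mathcal{H}_{\iota }$ is closed and $\mathcal{D}_{\mathcal{E}}$ is dense in $\mathcal{H}$, this forces $\mathcal{H}_{\iota }=\mathcal{H}$. As $\iota $ was arbitrary, $\mathcal{E}=\{\mathcal{H}\}$, and the concluding equality $C^{\ast }(\mathcal{D}_{\mathcal{E}})=B(\mathcal{H})$ is then immediate from the identity $C^{\ast }(\mathcal{D}_{\mathcal{E}})=B(\mathcal{H})$ noted in the preliminaries for the case $\mathcal{E}=\{\mathcal{H}\}$.

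The one place demanding care — and the only genuine obstacle — is the bookkeeping around the operator-inclusion definition of $\pi (\mathcal{A})^{\prime }$ and of the intersection $\pi (\mathcal{A})^{\prime }\cap C^{\ast }(\mathcal{D}_{\mathcal{E}})$. One must argue cleanly that centrality of $P_{\iota }$ in $C^{\ast }(\mathcal{D}_{\mathcal{E}})$ (an equality of operators on $\mathcal{D}_{\mathcal{E}}$) really delivers the inclusion $P_{\iota }\pi (a)\subseteq \pi (a)P_{\iota }$ required for $P_{\iota }\in \pi (\mathcal{A})^{\prime }$, and that $P_{\iota }$ is being correctly identified with its restriction $\left. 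P_{\iota }\right\vert _{\mathcal{D}_{\mathcal{E}}}$ when we regard it as an element of $C^{\ast }(\mathcal{D}_{\mathcal{E}})$; once these identifications are made explicit, the argument is short.
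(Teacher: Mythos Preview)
Your proof is correct and follows essentially the same route as the paper's: show that each $P_{\iota }$ (restricted to $\mathcal{D}_{\mathcal{E}}$) lies in $\pi(\mathcal{A})'\cap C^{\ast}(\mathcal{D}_{\mathcal{E}})$, then use irreducibility and $P_{\iota}\neq 0$ to force $P_{\iota}=I_{\mathcal{D}_{\mathcal{E}}}$. The only cosmetic difference is that you obtain $P_{\iota}\pi(a)\subseteq\pi(a)P_{\iota}$ via centrality of $P_{\iota}$ in $C^{\ast}(\mathcal{D}_{\mathcal{E}})$, whereas the paper reads this inclusion off directly from the defining condition $P_{\iota}T\subseteq TP_{\iota}$ for elements $T\in C^{\ast}(\mathcal{D}_{\mathcal{E}})$; both arguments are immediate.
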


\begin{proof}
Since $\{\mathcal{H},\mathcal{E}=\{\mathcal{H}_{\iota }\}_{\iota \in
\Upsilon },\mathcal{D}_{\mathcal{E}}\}$ is a commutative domain, for each $%
\iota \in \Upsilon ,$ $\left. P_{\iota }\right\vert _{\mathcal{D}_{\mathcal{E%
}}}\in C^{\ast }\left( \mathcal{D}_{\mathcal{E}}\right) $. On the other
hand, for each $\iota \in \Upsilon ,P_{\iota }\in B\left( \mathcal{H}\right)
$ and $P_{\iota }\pi \left( a\right) \subseteq \pi \left( a\right) P_{\iota
} $ for all $a\in \mathcal{A}$, and so $P_{\iota }\in \pi \left( \mathcal{A}%
\right) ^{\prime }$. Therefore, $\left. P_{\iota }\right\vert _{\mathcal{D}_{%
\mathcal{E}}}\in \pi \left( \mathcal{A}\right) ^{\prime }\cap C^{\ast
}\left( \mathcal{D}_{\mathcal{E}}\right) $ for all $\iota \in \Upsilon $,
and since $\pi $ is irreducible and $P_{\iota }\neq 0$, we have $P_{\iota
}=I_{\mathcal{D}_{\mathcal{E}}}$. Consequently, $\mathcal{H}_{\iota }=%
\mathcal{H}$ for all $\iota \in \Upsilon ,$ and then $C^{\ast }\left(
\mathcal{D}_{\mathcal{E}}\right) =B\left( \mathcal{H}\right) $.
\end{proof}

A local representation of a locally $C^{\ast }$-algebra $\mathcal{A}$ on a
Hilbert space $\mathcal{H}$ is in fact a continuous $\ast $-morphism from $%
\mathcal{A}$ to $B(\mathcal{H})$ and it is called a continuous $\ast $%
-representation of $\mathcal{A}$ on $\mathcal{H}$ \cite[\S\ 13]{Fr}.
Therefore, the irreducible local representations of a locally $C^{\ast }$%
-algebra $\mathcal{A}$ on commutative domains are local representations of $%
\mathcal{A}$ on Hilbert spaces. Moreover, we have the following
characterization of the irreducible local representations of a locally $%
C^{\ast }$-algebra $\mathcal{A}$ on a Hilbert space.

\begin{proposition}
\label{A} Let $\pi :\mathcal{A\rightarrow }B\left( \mathcal{H}\right) $ be a
local representation. Then the following statements are equivalent:

\begin{enumerate}
\item $\pi $ is irreducible;

\item There exist $\lambda _{0}\in \Lambda $ and an irreducible
representation of $\mathcal{A}_{\lambda _{0}}\ $on $\mathcal{H}$, $\pi
_{\lambda _{0}}:\mathcal{A}_{\lambda _{0}}\mathcal{\rightarrow }B\left(
\mathcal{H}\right) $, such that $\pi =\pi _{\lambda _{0}}\circ \pi _{\lambda
_{0}}^{\mathcal{A}}.\mathit{\ }$
\end{enumerate}
\end{proposition}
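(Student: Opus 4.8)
The plan is to exploit the continuity that is built into the definition of a local representation in order to factor $\pi$ through one of the quotient $C^{\ast}$-algebras $\mathcal{A}_{\lambda_{0}}$, after which the equivalence reduces to comparing two commutants. First I would unpack what a local representation $\pi:\mathcal{A}\rightarrow B(\mathcal{H})$ means when the quantized domain is $\{\mathcal{H}\}$: here $\Upsilon$ consists of a single index, so the defining inequality produces one $\lambda_{0}\in\Lambda$ with $\left\Vert \pi(a)\right\Vert \leq p_{\lambda_{0}}(a)$ for all $a\in\mathcal{A}$. In particular, whenever $p_{\lambda_{0}}(a)=0$ we get $\pi(a)=0$, so $\pi$ annihilates the closed ideal $\mathcal{I}_{\lambda_{0}}=\{a\in\mathcal{A}:p_{\lambda_{0}}(a)=0\}$.

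Next I would use this to obtain the factorization. Since $\pi$ vanishes on $\mathcal{I}_{\lambda_{0}}$, the rule $\pi_{\lambda_{0}}(a+\mathcal{I}_{\lambda_{0}}):=\pi(a)$ is a well-defined $\ast$-morphism from the $C^{\ast}$-algebra $\mathcal{A}_{\lambda_{0}}=\mathcal{A}/\mathcal{I}_{\lambda_{0}}$ into $B(\mathcal{H})$, that is, an ordinary $\ast$-representation of a $C^{\ast}$-algebra, and by construction $\pi=\pi_{\lambda_{0}}\circ\pi_{\lambda_{0}}^{\mathcal{A}}$. This already supplies the $\lambda_{0}$ and $\pi_{\lambda_{0}}$ demanded in (2), so it remains only to match the two irreducibility conditions.

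The key step, which settles both implications simultaneously, is the observation that because the quotient map $\pi_{\lambda_{0}}^{\mathcal{A}}$ is surjective we have $\pi(\mathcal{A})=\pi_{\lambda_{0}}(\mathcal{A}_{\lambda_{0}})$, whence the commutants coincide: $\pi(\mathcal{A})^{\prime}=\pi_{\lambda_{0}}(\mathcal{A}_{\lambda_{0}})^{\prime}$. Since the domain is a single Hilbert space, $C^{\ast}(\mathcal{D}_{\mathcal{E}})=B(\mathcal{H})$ and the inclusion $T\pi(a)\subseteq\pi(a)T$ appearing in the definition of irreducibility is genuine equality of bounded operators, so \textquotedblleft$\pi$ irreducible\textquotedblright\ is precisely the statement $\pi(\mathcal{A})^{\prime}=\mathbb{C}I$. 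Combining these, $\pi$ is irreducible if and only if $\pi_{\lambda_{0}}(\mathcal{A}_{\lambda_{0}})^{\prime}=\mathbb{C}I$, which by the classical $C^{\ast}$-algebra characterization of irreducibility holds exactly when $\pi_{\lambda_{0}}$ is an irreducible representation of $\mathcal{A}_{\lambda_{0}}$. Reading this chain left to right gives (1)$\Rightarrow$(2) and right to left gives (2)$\Rightarrow$(1).

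I do not anticipate a serious obstacle, as the argument is essentially a repackaging of standard facts. The only point requiring genuine care is the first one, namely extracting a single $\lambda_{0}$ dominating $\left\Vert \pi(\cdot)\right\Vert$ and verifying that $\pi$ therefore kills $\mathcal{I}_{\lambda_{0}}$ so that the factorization through $\mathcal{A}_{\lambda_{0}}$ is legitimate; everything afterwards is the routine interaction of a surjection with its induced map on commutants together with the equivalence between triviality of the commutant and irreducibility for $C^{\ast}$-representations.
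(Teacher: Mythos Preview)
Your argument is correct. The factorization step is exactly right: when the quantized domain is the single Hilbert space $\mathcal{H}$, the continuity condition in the definition of local representation produces one index $\lambda_{0}$ with $\Vert\pi(a)\Vert\leq p_{\lambda_{0}}(a)$, forcing $\pi$ to vanish on $\mathcal{I}_{\lambda_{0}}$ and hence to descend to a $\ast$-representation $\pi_{\lambda_{0}}$ of the $C^{\ast}$-algebra $\mathcal{A}_{\lambda_{0}}$. The surjectivity of $\pi_{\lambda_{0}}^{\mathcal{A}}$ then gives $\pi(\mathcal{A})=\pi_{\lambda_{0}}(\mathcal{A}_{\lambda_{0}})$, so the two commutants in $B(\mathcal{H})$ agree, and your reduction of Arunkumar's irreducibility condition to the classical one (trivial commutant) is legitimate because $C^{\ast}(\mathcal{D}_{\mathcal{E}})=B(\mathcal{H})$ and the inclusion $T\pi(a)\subseteq\pi(a)T$ between everywhere-defined bounded operators is just equality.

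As for comparison with the paper: the paper states this proposition without proof, treating it as a routine consequence of the projective-limit description of a locally $C^{\ast}$-algebra together with the discussion preceding the statement (in particular the reference to \cite[\S 13]{Fr} identifying local representations on a Hilbert space with continuous $\ast$-representations). Your write-up fills in precisely the details one would expect, and there is nothing to contrast.
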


\begin{remark}
\label{11} If $\pi _{\lambda _{0}}:\mathcal{A}_{\lambda _{0}}\mathcal{%
\rightarrow }B\left( \mathcal{H}\right) $ is an irreducible representation,
then for every $\lambda \in \Lambda $ with $\lambda _{0}\leq \lambda ,$ $\pi
_{\lambda _{0}}\circ \pi _{\lambda \lambda _{0}}^{\mathcal{A}}$ is an
irreducible representation of $\mathcal{A}_{\lambda }.$
\end{remark}

\subsection{Pure local $\mathcal{CP}$-maps}

A \textit{local operator system} is a self-adjoint subspace of a unital
locally $C^{\ast }$-algebra which contains the unity of the algebra.

Let $\mathcal{A}$ be a unital locally $C^{\ast }$-algebra with the topology
defined by the family of $C^{\ast }$-seminorms $\{p_{\lambda }\}_{\lambda
\in \Lambda },\mathcal{S}$ be a local operator system in $\mathcal{A}$ and $%
\{\mathcal{H},\mathcal{E}=\{\mathcal{H}_{\iota }\}_{\iota \in \Upsilon },%
\mathcal{D}_{\mathcal{E}}\}$ be a quantized domain in a Hilbert space $%
\mathcal{H}$.

\begin{definition}
\cite[Definition 4.6]{Ar} A local $\mathcal{CP}$-map $\varphi :\mathcal{%
S\rightarrow }C^{\ast }\left( \mathcal{D}_{\mathcal{E}}\right) $ is pure, if
for any local $\mathcal{CP}$-map $\psi :\mathcal{S\rightarrow }C^{\ast
}\left( \mathcal{D}_{\mathcal{E}}\right) $ such that $\varphi -\psi $ is a
local $\mathcal{CP}$-map, there exists $t_{0}\in \lbrack 0,1]$ such that $%
\psi =t_{0}\varphi $.
\end{definition}

\begin{proposition}
Let $\{\mathcal{H},\mathcal{E}=\{\mathcal{H}_{\iota }\}_{\iota \in \Upsilon
},\mathcal{D}_{\mathcal{E}}\}$ be a commutative domain and $\varphi :$ $%
\mathcal{S\rightarrow }C^{\ast }\left( \mathcal{D}_{\mathcal{E}}\right) $ be
a unital local $\mathcal{CP}$-map. If $\varphi $ is pure, then $\mathcal{H}%
_{\iota }=\mathcal{H}$ for all $\iota \in \Upsilon $ and $C^{\ast }\left(
\mathcal{D}_{\mathcal{E}}\right) =B\left( \mathcal{H}\right) .$
\end{proposition}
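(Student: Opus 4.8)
The plan is to mirror the argument of Proposition~\ref{1}, replacing the role of irreducibility by that of purity. Fix $\iota \in \Upsilon$. On a commutative domain the restriction $\left.P_\iota\right|_{\mathcal{D}_\mathcal{E}}$ belongs to $C^*(\mathcal{D}_\mathcal{E})$ and in fact lies in its center $\mathcal{Z}(C^*(\mathcal{D}_\mathcal{E}))$, so it commutes, as an operator on $\mathcal{D}_\mathcal{E}$, with every $\varphi(s)$. I would then define $\psi:\mathcal{S}\to C^*(\mathcal{D}_\mathcal{E})$ by $\psi(s) = \left.P_\iota\right|_{\mathcal{D}_\mathcal{E}}\,\varphi(s)$; since the left factor is a central element of the $\ast$-algebra $C^*(\mathcal{D}_\mathcal{E})$, the product $\psi(s)$ again lies in $C^*(\mathcal{D}_\mathcal{E})$, and $\varphi - \psi = (I_{\mathcal{D}_\mathcal{E}} - \left.P_\iota\right|_{\mathcal{D}_\mathcal{E}})\,\varphi$ has the same form with the complementary central projection.

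The substance of the argument is to check that $\psi$ and $\varphi-\psi$ are local $\mathcal{CP}$-maps. For a fixed $\kappa\in\Upsilon$, I would choose $\lambda$ witnessing the local complete positivity of $\varphi$ at level $\kappa$. Because $P_\iota$ and $P_\kappa$ commute, $P_\iota$ leaves $\mathcal{H}_\kappa$ invariant and restricts to a projection $\left.P_\iota\right|_{\mathcal{H}_\kappa}\in B(\mathcal{H}_\kappa)$; moreover $\left.\psi(s)\right|_{\mathcal{H}_\kappa} = \left.P_\iota\right|_{\mathcal{H}_\kappa}\,\left.\varphi(s)\right|_{\mathcal{H}_\kappa}$, a product of two commuting positive operators whenever $s\geq_\lambda 0$, hence positive. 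The identical reasoning at the amplified level uses that the diagonal projection $P_\iota^{\oplus n}=\mathrm{diag}(P_\iota,\dots,P_\iota)$ is central in $C^*(\mathcal{D}_{\mathcal{E}^{\oplus n}})$ and satisfies $\psi^{(n)}([a_{ij}]) = P_\iota^{\oplus n}\,\varphi^{(n)}([a_{ij}])$, so that $\left.\psi^{(n)}([a_{ij}])\right|_{\mathcal{H}_\kappa^{\oplus n}}$ is again a product of commuting positive operators when $[a_{ij}]\geq_\lambda 0$; the vanishing condition when $[a_{ij}]=_\lambda 0$ is immediate from the corresponding property of $\varphi$. The same $\lambda$ works for $\varphi-\psi$, so both maps are local $\mathcal{CP}$.

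With this in hand, purity of $\varphi$ yields $t_0\in[0,1]$ with $\psi = t_0\varphi$, that is $\left.P_\iota\right|_{\mathcal{D}_\mathcal{E}}\,\varphi(s) = t_0\,\varphi(s)$ for all $s\in\mathcal{S}$. Evaluating at the unit and using $\varphi(1)=I_{\mathcal{D}_\mathcal{E}}$ gives $\left.P_\iota\right|_{\mathcal{D}_\mathcal{E}} = t_0 I_{\mathcal{D}_\mathcal{E}}$. Since $\left.P_\iota\right|_{\mathcal{D}_\mathcal{E}}$ is a nonzero projection, this forces $t_0=1$, whence $\left.P_\iota\right|_{\mathcal{D}_\mathcal{E}} = I_{\mathcal{D}_\mathcal{E}}$; as $\mathcal{H}_\iota$ is closed and $\mathcal{D}_\mathcal{E}$ is dense in $\mathcal{H}$, this gives $\mathcal{H}_\iota=\mathcal{H}$. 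Running this over every $\iota\in\Upsilon$ yields $\mathcal{E}=\{\mathcal{H}\}$ and therefore $C^*(\mathcal{D}_\mathcal{E})=B(\mathcal{H})$.

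The main obstacle I anticipate is the verification that the compressed maps $\psi$ and $\varphi-\psi$ are genuinely local $\mathcal{CP}$, rather than merely local positive. The delicate point is that local complete positivity is tested seminorm-by-seminorm on each $\mathcal{H}_\kappa^{\oplus n}$, so one must confirm that a single $\lambda$ chosen for $\varphi$ continues to witness positivity of the compressions at every amplification level; this is exactly where the centrality of $P_\iota^{\oplus n}$ in $C^*(\mathcal{D}_{\mathcal{E}^{\oplus n}})$ and the fact that commuting positive operators have a positive product are indispensable.
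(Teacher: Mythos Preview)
Your argument is correct and follows the same strategy as the paper: compress $\varphi$ by the projection $P_\iota$, verify that both $\psi$ and $\varphi-\psi$ are local $\mathcal{CP}$, invoke purity, and evaluate at the unit. The only cosmetic difference is that you write $\psi = \left.P_\iota\right|_{\mathcal{D}_{\mathcal{E}}}\varphi$ as a one-sided product and argue positivity via ``commuting positives have positive product'', whereas the paper uses the two-sided compression $P_{\iota_0}\varphi(\cdot)P_{\iota_0}$ and quadratic-form computations; your use of centrality actually streamlines the check for $\varphi-\psi$, since the same $\lambda$ works at level $\kappa$ without passing to a larger index $\tilde\iota$ as the paper does.
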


\begin{proof}
Let $\iota _{0}\in \Upsilon \ $and $\psi :\mathcal{S\rightarrow }C^{\ast
}\left( \mathcal{D}_{\mathcal{E}}\right) $ be a linear map defined by $\psi
\left( a\right) =P_{\iota _{0}}\varphi \left( a\right) \left. P_{\iota
_{0}}\right\vert _{\mathcal{D}_{\mathcal{E}}}$. \ Since $\varphi $ is a
local $\mathcal{CP}$-map, there exists $\ \ \lambda _{0}\in \Lambda $\ \
such \ \ that $\left. \varphi ^{\left( n\right) }\left( \left[ a_{ij}\right]
\right) \right\vert _{\mathcal{H}_{\iota _{0}}^{\oplus n}}$ $\geq 0$
whenever $\left[ a_{ij}\right] \geq _{\lambda _{0}}0\ $and $\left. \varphi
^{\left( n\right) }\left( \left[ a_{ij}\right] \right) \right\vert _{%
\mathcal{H}_{\iota _{0}}^{\oplus n}}=0$ whenever $\left[ a_{ij}\right]
=_{\lambda _{0}}0$ for all $n$.

Let $\iota \in \Upsilon ,$ $\left( \xi _{k}\right) _{k=1}^{n}\in \mathcal{H}%
_{\iota }^{\oplus n}$ and $\left[ a_{ij}\right] \in M_{n}(\mathcal{A}),n\geq
1$. From $\ $%
\begin{equation*}
\left\langle \psi ^{\left( n\right) }\left( \left[ a_{ij}\right] \right)
\left( \xi _{k}\right) _{k=1}^{n},\left( \xi _{k}\right)
_{k=1}^{n}\right\rangle =\left\langle \varphi ^{\left( n\right) }\left( %
\left[ a_{ij}\right] \right) \left( P_{\iota _{0}}\xi _{k}\right)
_{k=1}^{n},\left( P_{\iota _{0}}\xi _{k}\right) _{k=1}^{n}\right\rangle
\end{equation*}%
and taking into account that $\left. \varphi ^{\left( n\right) }\left( \left[
a_{ij}\right] \right) \right\vert _{\mathcal{H}_{\iota _{0}}^{\oplus n}}$ $\
\geq 0$ whenever $\left[ a_{ij}\right] \geq _{\lambda _{0}}0\ $and $\left.
\varphi ^{\left( n\right) }\left( \left[ a_{ij}\right] \right) \right\vert _{%
\mathcal{H}_{\iota _{0}}^{\oplus n}}=0$ whenever $\left[ a_{ij}\right]
=_{\lambda _{0}}0$ for all $n,$ we deduce that $\left. \psi ^{\left(
n\right) }\left( \left[ a_{ij}\right] \right) \right\vert _{\mathcal{H}%
_{\iota }^{\oplus n}}\geq 0$ whenever $\left[ a_{ij}\right] \geq _{\lambda
_{0}}0\ $and $\left. \psi ^{\left( n\right) }\left( \left[ a_{ij}\right]
\right) \right\vert _{\mathcal{H}_{\iota }^{\oplus n}}=0$ whenever $\left[
a_{ij}\right] =_{\lambda _{0}}0$ for all $n$. Therefore, $\psi $ is a local $%
\mathcal{CP}$-map.

To show that $\varphi -\psi $ is a local $\mathcal{CP}$-map, let $\iota \in
\Upsilon ,\left( \xi _{k}\right) _{k=1}^{n}\in \mathcal{H}_{\iota }^{\oplus
n}$ and $\left[ a_{ij}\right] $ $\in M_{n}\left( \mathcal{A}\right) ,$ $%
n\geq 1$. We have
\begin{eqnarray*}
&&\left\langle \left( \varphi ^{\left( n\right) }\left( \left[ a_{ij}\right]
\right) -\psi ^{\left( n\right) }\left( \left[ a_{ij}\right] \right) \right)
\left( \xi _{k}\right) _{k=1}^{n},\left( \xi _{k}\right)
_{k=1}^{n}\right\rangle \\
&=&\left\langle \varphi ^{\left( n\right) }\left( \left[ a_{ij}\right]
\right) \left( \xi _{k}\right) _{k=1}^{n},\left( \xi _{k}\right)
_{k=1}^{n}\right\rangle -\left\langle \varphi ^{\left( n\right) }\left( %
\left[ a_{ij}\right] \right) \left( P_{\iota _{0}}\xi _{k}\right)
_{k=1}^{n},\left( P_{\iota _{0}}\xi _{k}\right) _{k=1}^{n}\right\rangle \\
&=&\left\langle \varphi ^{\left( n\right) }\left( \left[ a_{ij}\right]
\right) \left( \xi _{k}-P_{\iota _{0}}\xi _{k}\right) _{k=1}^{n},\left( \xi
_{k}-P_{\iota _{0}}\xi _{k}\right) _{k=1}^{n}\right\rangle .
\end{eqnarray*}%
Since $\Upsilon $ is a directed poset, there exists $\widetilde{\iota }\in
\Upsilon $ such that $\iota _{0}\leq \widetilde{\iota }$ and $\iota \leq $ $%
\widetilde{\iota }$, and then $\left( \xi _{k}-P_{\iota _{0}}\xi _{k}\right)
_{k=1}^{n}\in $ $\mathcal{H}_{\widetilde{\iota }}^{\oplus n}$. On the other
hand, since $\varphi $ is a local $\mathcal{CP}$-map, there exists $%
\widetilde{\lambda }\in \Lambda $ such that $\left. \varphi ^{\left(
n\right) }\left( \left[ a_{ij}\right] \right) \right\vert _{\mathcal{H}_{%
\widetilde{\iota }}^{\oplus n}}\geq 0$ whenever $\left[ a_{ij}\right] \geq _{%
\widetilde{\lambda }}0\ $and $\left. \varphi ^{\left( n\right) }\left( \left[
a_{ij}\right] \right) \right\vert _{\mathcal{H}_{\widetilde{\iota }}^{\oplus
n}}=0$ whenever $\left[ a_{ij}\right] =_{\widetilde{\lambda }}0$ for all $n$%
. Therefore,
\begin{equation*}
\left\langle \varphi ^{\left( n\right) }\left( \left[ a_{ij}\right] \right)
\left( \xi _{k}-P_{\iota _{0}}\xi _{k}\right) _{k=1}^{n},\left( \xi
_{k}-P_{\iota _{0}}\xi _{k}\right) _{k=1}^{n}\right\rangle \geq 0
\end{equation*}%
whenever $\left[ a_{ij}\right] \geq _{\widetilde{\lambda }}0$ and
\begin{equation*}
\left\langle \varphi ^{\left( n\right) }\left( \left[ a_{ij}\right] \right)
\left( \xi _{k}-P_{\iota _{0}}\xi _{k}\right) _{k=1}^{n},\left( \xi
_{k}-P_{\iota _{0}}\xi _{k}\right) _{k=1}^{n}\right\rangle =0
\end{equation*}%
whenever $\left[ a_{ij}\right] =_{\widetilde{\lambda }}0$ for all $n.$
Consequently, $\varphi -\psi $ is a local $\mathcal{CP}$-map. Since $\varphi
$ is pure, there exists $t_{0}\in \lbrack 0,1]$ such that $\psi
=t_{0}\varphi $. Then
\begin{equation*}
0\neq \left. P_{\iota _{0}}\right\vert _{\mathcal{D}_{\mathcal{E}}}=\psi
\left( 1_{\mathcal{A}}\right) =t_{0}\varphi \left( 1_{\mathcal{A}}\right)
=t_{0}I_{\mathcal{D}_{\mathcal{E}}}
\end{equation*}%
and so, $P_{\iota _{0}}=I_{\mathcal{H}}$. Therefore, $\mathcal{H}_{\iota }=%
\mathcal{H}$ for all $\iota \in \Upsilon $, and then $C^{\ast }\left(
\mathcal{D}_{\mathcal{E}}\right) =B\left( \mathcal{H}\right) $.
\end{proof}

So, the pure unital local $\mathcal{CP}$-maps of a locally $C^{\ast }$%
-algebra $\mathcal{A}$ on commutative domains are pure unital continuous $%
\mathcal{CP}$-maps with values in the $C^{\ast }$-algebra of all bounded
linear operators on a Hilbert space.

If $\mathcal{S}$ is a local operator system in $\mathcal{A}$, then, for each
$\lambda \in \Lambda ,$ $\mathcal{S}_{\lambda }=\pi _{\lambda }^{\mathcal{A}%
}\left( \mathcal{S}\right) $ is an operator system in $\mathcal{A}_{\lambda
} $. Moreover, if $\varphi :\mathcal{S\rightarrow }B\mathcal{(H)}$ is a
local $\mathcal{CP}$-map, then there exist $\lambda \in \Lambda $ and a $%
\mathcal{CP}$-map $\varphi _{\lambda }:\mathcal{S}_{\lambda }\rightarrow B%
\mathcal{(H)}$ such that $\varphi =\varphi _{\lambda }\circ \left. \pi
_{\lambda }^{\mathcal{A}}\right\vert _{\mathcal{S}}\ $(see \cite[Remark 3.1]%
{JM22}).

We have the following characterization for the pure local $\mathcal{CP}$%
-maps.

\begin{proposition}
\label{2} Let $\varphi :\mathcal{S\rightarrow }B\left( \mathcal{H}\right) $
be a local $\mathcal{CP}$-map. Then the following statements are equivalent:

\begin{enumerate}
\item $\varphi $ is pure;

\item There exist $\lambda _{0}\in \Lambda $ and a pure $\mathcal{CP}$-map, $%
\varphi _{\lambda _{0}}:\mathcal{S}_{\lambda _{0}}\mathcal{\rightarrow }%
B\left( \mathcal{H}\right) $, such that $\varphi =\varphi _{\lambda
_{0}}\circ \left. \pi _{\lambda _{0}}^{\mathcal{A}}\right\vert _{\mathcal{S}%
}.$\textit{\ }
\end{enumerate}
\end{proposition}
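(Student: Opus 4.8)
The plan is to push everything down to the $C^{\ast }$-level through the factorization recalled just before the statement: every local $\mathcal{CP}$-map $\mathcal{S}\rightarrow B(\mathcal{H})$ has the form $\varphi _{\lambda }\circ \left. \pi _{\lambda }^{\mathcal{A}}\right\vert _{\mathcal{S}}$ for some $\lambda \in \Lambda $ and some $\mathcal{CP}$-map $\varphi _{\lambda }:\mathcal{S}_{\lambda }\rightarrow B(\mathcal{H})$. Two facts will be used throughout. First, the witnessing/factoring index may be enlarged: if $\lambda \leq \lambda ^{\prime }$ then $\mathcal{I}_{\lambda ^{\prime }}\subseteq \mathcal{I}_{\lambda }$, while $a=_{\lambda ^{\prime }}0$ implies $a=_{\lambda }0$ and $a\geq _{\lambda ^{\prime }}0$ implies $a\geq _{\lambda }0$, so the map also factors and witnesses at $\lambda ^{\prime }$ with $\varphi _{\lambda ^{\prime }}=\varphi _{\lambda }\circ \left. \pi _{\lambda ^{\prime }\lambda }^{\mathcal{A}}\right\vert _{\mathcal{S}_{\lambda ^{\prime }}}$. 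Second, $\left. \pi _{\lambda }^{\mathcal{A}}\right\vert _{\mathcal{S}}:\mathcal{S}\rightarrow \mathcal{S}_{\lambda }$ is onto, so a relation between two $\mathcal{CP}$-maps on $\mathcal{S}_{\lambda }$ is equivalent to the same relation between their compositions with $\left. \pi _{\lambda }^{\mathcal{A}}\right\vert _{\mathcal{S}}$.

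For $(1)\Rightarrow (2)$, assume $\varphi $ pure and write $\varphi =\varphi _{\lambda _{0}}\circ \left. \pi _{\lambda _{0}}^{\mathcal{A}}\right\vert _{\mathcal{S}}$. To see that $\varphi _{\lambda _{0}}$ is pure, take any $\mathcal{CP}$-map $\rho :\mathcal{S}_{\lambda _{0}}\rightarrow B(\mathcal{H})$ with $\varphi _{\lambda _{0}}-\rho $ completely positive and set $\psi :=\rho \circ \left. \pi _{\lambda _{0}}^{\mathcal{A}}\right\vert _{\mathcal{S}}$. Since $\pi _{\lambda _{0}}^{\mathcal{A}}$ is a $\ast $-morphism, both $\psi $ and $\varphi -\psi =(\varphi _{\lambda _{0}}-\rho )\circ \left. \pi _{\lambda _{0}}^{\mathcal{A}}\right\vert _{\mathcal{S}}$ are local $\mathcal{CP}$-maps, the index $\lambda _{0}$ witnessing the defining conditions (a matrix $\geq _{\lambda _{0}}0$ maps to a positive matrix over $\mathcal{S}_{\lambda _{0}}$, and a matrix $=_{\lambda _{0}}0$ maps to $0$). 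Purity of $\varphi $ gives $t_{0}\in \lbrack 0,1]$ with $\psi =t_{0}\varphi $, i.e. $\rho \circ \left. \pi _{\lambda _{0}}^{\mathcal{A}}\right\vert _{\mathcal{S}}=t_{0}\varphi _{\lambda _{0}}\circ \left. \pi _{\lambda _{0}}^{\mathcal{A}}\right\vert _{\mathcal{S}}$; surjectivity of $\left. \pi _{\lambda _{0}}^{\mathcal{A}}\right\vert _{\mathcal{S}}$ yields $\rho =t_{0}\varphi _{\lambda _{0}}$, so $\varphi _{\lambda _{0}}$ is pure.

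For $(2)\Rightarrow (1)$, assume $\varphi =\varphi _{\lambda _{0}}\circ \left. \pi _{\lambda _{0}}^{\mathcal{A}}\right\vert _{\mathcal{S}}$ with $\varphi _{\lambda _{0}}$ pure, and let $\psi $ be a local $\mathcal{CP}$-map with $\varphi -\psi $ local $\mathcal{CP}$. Picking a common index $\lambda \geq \lambda _{0}$ that simultaneously witnesses and factors $\varphi $, $\psi $ and $\varphi -\psi $, I obtain $\mathcal{CP}$-maps $\varphi _{\lambda },\psi _{\lambda }$ on $\mathcal{S}_{\lambda }$ with $\varphi =\varphi _{\lambda }\circ \left. \pi _{\lambda }^{\mathcal{A}}\right\vert _{\mathcal{S}}$, $\psi =\psi _{\lambda }\circ \left. \pi _{\lambda }^{\mathcal{A}}\right\vert _{\mathcal{S}}$, where $\varphi _{\lambda }=\varphi _{\lambda _{0}}\circ \left. \pi _{\lambda \lambda _{0}}^{\mathcal{A}}\right\vert _{\mathcal{S}_{\lambda }}$ and, by surjectivity of $\left. \pi _{\lambda }^{\mathcal{A}}\right\vert _{\mathcal{S}}$, $\varphi _{\lambda }-\psi _{\lambda }$ is completely positive. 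Thus it suffices to show that $\varphi _{\lambda }$ is pure: then $\psi _{\lambda }=t_{0}\varphi _{\lambda }$, and composing with $\left. \pi _{\lambda }^{\mathcal{A}}\right\vert _{\mathcal{S}}$ gives $\psi =t_{0}\varphi $. This is the $\mathcal{CP}$-analogue of Remark \ref{11}: purity should persist when $\varphi _{\lambda _{0}}$ is pre-composed with the canonical surjection $\left. \pi _{\lambda \lambda _{0}}^{\mathcal{A}}\right\vert _{\mathcal{S}_{\lambda }}:\mathcal{S}_{\lambda }\rightarrow \mathcal{S}_{\lambda _{0}}$.

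I expect this last persistence to be the main obstacle. For representations (Remark \ref{11}) it is immediate, because a surjection leaves the commutant unchanged; here a $\mathcal{CP}$-map $\sigma $ with $0\leq \sigma \leq \varphi _{\lambda }$ need not visibly vanish on $\mathcal{S}_{\lambda }\cap \ker \pi _{\lambda \lambda _{0}}^{\mathcal{A}}$. Domination only forces $\sigma $ to vanish on the \emph{positive} elements of this kernel, and an operator system is not spanned by its positive part, so $\sigma $ does not descend through $\left. \pi _{\lambda \lambda _{0}}^{\mathcal{A}}\right\vert _{\mathcal{S}_{\lambda }}$ by any elementary manipulation. I would resolve this by dilation: extend $\varphi _{\lambda _{0}}$ to a $\mathcal{CP}$-map on $\mathcal{A}_{\lambda _{0}}$ by Arveson's theorem, pull it back along the $\ast $-morphism $\pi _{\lambda \lambda _{0}}^{\mathcal{A}}$ to a $\mathcal{CP}$-map $\Phi $ on $\mathcal{A}_{\lambda }$ extending $\varphi _{\lambda }$, and note that the minimal Stinespring representation of $\Phi $ factors through $\pi _{\lambda \lambda _{0}}^{\mathcal{A}}$ and hence annihilates $\ker \pi _{\lambda \lambda _{0}}^{\mathcal{A}}$. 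The crux is then to extend $\sigma $ to a $\mathcal{CP}$-map on $\mathcal{A}_{\lambda }$ that is \emph{still} dominated by $\Phi $; once that is done, Arveson's Radon--Nikodym theorem represents the extension by a positive contraction in the commutant of that minimal representation, forcing it, and therefore $\sigma $, to annihilate $\ker \pi _{\lambda \lambda _{0}}^{\mathcal{A}}$ and to descend to a $\mathcal{CP}$-map on $\mathcal{S}_{\lambda _{0}}$ dominated by $\varphi _{\lambda _{0}}$. Purity of $\varphi _{\lambda _{0}}$ then closes the argument, and this extension-of-dominated-maps step is the only point demanding genuine work.
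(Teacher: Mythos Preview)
Your argument for $(1)\Rightarrow(2)$ matches the paper's. For $(2)\Rightarrow(1)$ the paper does not pass to a common $\lambda\geq\lambda_0$; it argues directly that the domination $0\leq\psi\leq\varphi$, together with $\varphi$ being $\lambda_0$-$\mathcal{CP}$, forces $\psi$ itself to descend to a $\mathcal{CP}$-map $\psi_{\lambda_0}\leq\varphi_{\lambda_0}$ on $\mathcal{S}_{\lambda_0}$ (with a reference to \cite[Remark~3.1]{JM22}), after which purity of $\varphi_{\lambda_0}$ finishes. This is the same descent problem you isolate, approached from the other side, so both arguments hinge on the identical point; the paper's Remark immediately following the proposition also records that purity of $\varphi_{\lambda_0}$ persists along $\pi_{\lambda\lambda_0}^{\mathcal{A}}$ and calls this ``easy to check''.

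Your instinct that this step is the real obstacle is correct, but the resolution you sketch cannot be completed, because $(2)\Rightarrow(1)$ is false as stated. Take $\Lambda=\{\lambda_0<\lambda\}$, $\mathcal{A}=C[0,1]$ with $p_{\lambda_0}(f)=|f(0)|$ and $p_\lambda(f)=\Vert f\Vert_\infty$, and $\mathcal{S}=\mathrm{span}\{1,g\}$ with $g(t)=t(1-2t)$. Then $\mathcal{S}_{\lambda_0}=\mathbb{C}$, and $\varphi(a+bg):=a$ factors through $\pi_{\lambda_0}^{\mathcal{A}}$ with $\varphi_{\lambda_0}=\mathrm{id}_{\mathbb{C}}$ pure, so $(2)$ holds. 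Yet $\varphi$ is not pure: with $\psi:=\tfrac12\,\delta_{1/4}\big|_{\mathcal{S}}$, i.e.\ $\psi(a+bg)=\tfrac{a}{2}+\tfrac{b}{16}$, both $\psi$ and $\varphi-\psi=\bigl(\tfrac{7}{18}\,\delta_{1/4}+\tfrac{1}{9}\,\delta_1\bigr)\big|_{\mathcal{S}}$ are restrictions of positive measures on $[0,1]$, hence local $\mathcal{CP}$ with witness $\lambda$, while $\psi(g)=\tfrac{1}{16}\neq 0$ shows $\psi$ is no multiple of $\varphi$. Geometrically, $t=0$ lies outside the Choquet boundary of the two-dimensional function system $\mathcal{S}$, so evaluation there is pure on the one-dimensional quotient $\mathcal{S}_{\lambda_0}$ but not on $\mathcal{S}_\lambda=\mathcal{S}$. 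Thus the ``dominated extension'' you flag as needing genuine work does not exist in this example, and the paper's claimed descent of $\psi$ through $\lambda_0$ fails for the same reason. (A minor correction: a unital operator system \emph{is} spanned by its positive cone; the genuine obstruction, which your reasoning in effect located, is that $\mathcal{S}_\lambda\cap\ker\pi_{\lambda\lambda_0}^{\mathcal{A}}$, not containing the unit, need not be.)
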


\begin{proof}
$\left( 1\right) \Rightarrow \left( 2\right) $ By \cite[Remark 3.1]{JM22},
there exist $\lambda _{0}\in \Lambda $ and a $\mathcal{CP}$-map $\varphi
_{\lambda _{0}}:\mathcal{S}_{\lambda _{0}}\rightarrow B\mathcal{(H)}$ such
that $\varphi =\varphi _{\lambda _{0}}\circ \left. \pi _{\lambda _{0}}^{%
\mathcal{A}}\right\vert _{\mathcal{S}}$. Let $\psi _{0}:$ $\mathcal{S}%
_{\lambda _{0}}\rightarrow B\mathcal{(H)}$ be a $\mathcal{CP}$-map such that
$\psi _{0}\leq \varphi _{\lambda _{0}}$. Then $\psi =\psi _{0}\circ \left.
\pi _{\lambda _{0}}^{\mathcal{A}}\right\vert _{\mathcal{S}}$ is a local $%
\mathcal{CP}$-map and $\psi \leq \varphi $. Since $\varphi $ is pure, there
exists $t_{0}\in \lbrack 0,1]$ such that $\psi =t_{0}\varphi $, and
consequently $\psi _{0}=t_{0}\varphi _{\lambda _{0}}.$

$\left( 2\right) \Rightarrow \left( 1\right) $ Let $\varphi _{\lambda _{0}}:%
\mathcal{S}_{\lambda _{0}}\mathcal{\rightarrow }B\left( \mathcal{H}\right) $
be a pure $\mathcal{CP}$-map such that $\varphi =\varphi _{\lambda
_{0}}\circ \left. \pi _{\lambda _{0}}^{\mathcal{A}}\right\vert _{\mathcal{S}%
}\ $and $\psi :$ $\mathcal{S}\rightarrow B\mathcal{(H)}$ be a local $%
\mathcal{CP}$-map such that $\psi \leq \varphi $. From $0\leq \psi \leq
\varphi $ and taking into account that $\varphi \left( a\right) \geq 0$
whenever $a\geq _{\lambda _{0}}0\ $and $\varphi \left( a\right) =0$ whenever
$a=_{\lambda _{0}}0$, we deduce that there exists a linear map $\psi
_{\lambda _{0}}:\left( \mathcal{S}_{\lambda _{0}}\right) _{+}\rightarrow B%
\mathcal{(H)}$ such that $\psi _{\lambda _{0}}\left( \pi _{\lambda _{0}}^{%
\mathcal{A}}\left( a\right) \right) =\psi \left( a\right) $, and which
extends by linearity to a $\mathcal{CP}$-map $\psi _{\lambda _{0}}:\mathcal{S%
}_{\lambda _{0}}\rightarrow B\mathcal{(H)}$. Moreover, $\psi =\psi _{\lambda
_{0}}\circ \left. \pi _{\lambda _{0}}^{\mathcal{A}}\right\vert _{\mathcal{S}%
} $ (see \cite[Remark 3.1]{JM22}). Then $\psi _{\lambda _{0}}\leq \varphi
_{\lambda _{0}}$, and since $\varphi _{\lambda _{0}}$ is pure, there exists $%
t_{0}\in \lbrack 0,1]$ such that $\psi _{\lambda _{0}}=t_{0}\varphi
_{\lambda _{0}}$, and consequently $\psi =t_{0}\varphi $.
\end{proof}

\begin{remark}
It is easy to check that if $\varphi _{\lambda _{0}}:\mathcal{S}_{\lambda
_{0}}\mathcal{\rightarrow }B\left( \mathcal{H}\right) $ is a pure $\mathcal{%
CP}$-map, then for every $\lambda \in \Lambda $ with $\lambda _{0}\leq
\lambda ,$ $\varphi _{\lambda _{0}}\circ \pi _{\lambda \lambda _{0}}^{%
\mathcal{A}}$ is a pure $\mathcal{CP}$-map on $\mathcal{S}_{\lambda }.$
\end{remark}

\subsection{Local boundary representations}

Let $\mathcal{A}$ be a unital locally $C^{\ast }$-algebra with the topology
defined by the family of $C^{\ast }$-seminorms $\{p_{\lambda }\}_{\lambda
\in \Lambda },\mathcal{S}$ be a local operator system in $\mathcal{A}$ and $%
\{\mathcal{H},\mathcal{E}=\{\mathcal{H}_{\iota }\}_{\iota \in \Upsilon },%
\mathcal{D}_{\mathcal{E}}\}$ be a quantized domain in a Hilbert space $%
\mathcal{H}.$\textit{\ }

\begin{definition}
\cite[Definition 5.5]{Ar} Let $\mathcal{S}$ be a local operator space in $%
\mathcal{A}\ $such that $\mathcal{S}$ generates $\mathcal{A}$. A local
representation $\pi :$ $\mathcal{A}\rightarrow $ $C^{\ast }\left( \mathcal{D}%
_{\mathcal{E}}\right) $ is a local boundary representation for $\mathcal{S}$
if:

\begin{enumerate}
\item $\pi $ is irreducible;

\item $\pi $ is the unique local completely positive extension of the unital
local $\mathcal{CP}$-map $\left. \pi \right\vert _{\mathcal{S}}$ to $%
\mathcal{A}$.
\end{enumerate}
\end{definition}

\begin{remark}
\label{C}According to Proposition \ref{1}, the local boundary
representations for a local operator system on commutative domains are local
representations on Hilbert spaces.
\end{remark}

In the following proposition we give a characterization of a local boundary representation on a Hilbert space for a local operator system $\mathcal{S}$ in terms of the boundary representations for the operator systems $\mathcal{S}_\lambda, \lambda\in \Lambda$. 
   
\begin{proposition}
\label{3} Let $\mathcal{S}$ be a local operator system in $\mathcal{A}$ such
that $\mathcal{S}$ generates $\mathcal{A}$ and $\pi :\mathcal{A\rightarrow }%
B\left( \mathcal{H}\right) $ be a local representation. Then the following
statements are equivalent:

\begin{enumerate}
\item $\pi \ $is a boundary representation for $\mathcal{S}$;

\item There exist $\lambda _{0}\in \Lambda $ and a boundary representation
for $\mathcal{S}_{\lambda _{0}}$, $\pi _{\lambda _{0}}:\mathcal{A}_{\lambda
_{0}}\mathcal{\rightarrow }B\left( \mathcal{H}\right) $, such that $\pi =\pi
_{\lambda _{0}}\circ \pi _{\lambda _{0}}^{\mathcal{A}}$, $\ $and for each $%
\lambda \in \Lambda $ with $\lambda _{0}\leq \lambda ,$ $\pi _{\lambda
_{0}}\circ \pi _{\lambda \lambda _{0}}^{\mathcal{A}}\ $is a boundary
representation for $\mathcal{S}_{\lambda }.$
\end{enumerate}
\end{proposition}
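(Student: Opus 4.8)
The plan is to reduce both defining conditions of a boundary representation---irreducibility and the unique extension property---to the corresponding conditions at each finite level $\mathcal{A}_{\lambda}$, using the factorization results already established. Throughout I will exploit that each $\pi_{\lambda}^{\mathcal{A}}$ is a surjective $\ast$-morphism, so that $\mathcal{S}_{\lambda}=\pi_{\lambda}^{\mathcal{A}}(\mathcal{S})$ generates $\mathcal{A}_{\lambda}$ whenever $\mathcal{S}$ generates $\mathcal{A}$, together with the compatibility relation $\pi_{\lambda\lambda_{0}}^{\mathcal{A}}\circ\pi_{\lambda}^{\mathcal{A}}=\pi_{\lambda_{0}}^{\mathcal{A}}$ for $\lambda_{0}\leq\lambda$.

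For $(1)\Rightarrow(2)$: since $\pi$ is irreducible, Proposition \ref{A} supplies $\lambda_{0}$ and an irreducible representation $\pi_{\lambda_{0}}$ of $\mathcal{A}_{\lambda_{0}}$ with $\pi=\pi_{\lambda_{0}}\circ\pi_{\lambda_{0}}^{\mathcal{A}}$, and by Remark \ref{11} the map $\pi_{\lambda_{0}}\circ\pi_{\lambda\lambda_{0}}^{\mathcal{A}}$ is irreducible for each $\lambda\geq\lambda_{0}$. It remains to verify the unique extension property at each such level. Given a $\mathcal{CP}$-extension $\psi_{\lambda}$ of $(\pi_{\lambda_{0}}\circ\pi_{\lambda\lambda_{0}}^{\mathcal{A}})|_{\mathcal{S}_{\lambda}}$ to $\mathcal{A}_{\lambda}$, I would form $\psi:=\psi_{\lambda}\circ\pi_{\lambda}^{\mathcal{A}}:\mathcal{A}\rightarrow B(\mathcal{H})$ and check---using that $[a_{ij}]\geq_{\lambda}0$ (resp. $=_{\lambda}0$) means precisely that its $\pi_{\lambda}^{\mathcal{A}}$-image is positive (resp. zero)---that $\psi$ is a local $\mathcal{CP}$-map. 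A short computation with the compatibility relation shows $\psi|_{\mathcal{S}}=\pi|_{\mathcal{S}}$, so the uniqueness hypothesis in (1) forces $\psi=\pi$, and surjectivity of $\pi_{\lambda}^{\mathcal{A}}$ then yields $\psi_{\lambda}=\pi_{\lambda_{0}}\circ\pi_{\lambda\lambda_{0}}^{\mathcal{A}}$. Taking $\lambda=\lambda_{0}$ gives, in particular, the boundary representation for $\mathcal{S}_{\lambda_{0}}$.

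For $(2)\Rightarrow(1)$: irreducibility of $\pi$ is immediate from Proposition \ref{A}, since $\pi_{\lambda_{0}}$ is irreducible. For uniqueness, let $\psi$ be any local $\mathcal{CP}$-extension of $\pi|_{\mathcal{S}}$ to $\mathcal{A}$. Applying the factorization of local $\mathcal{CP}$-maps (\cite[Remark 3.1]{JM22}, with $\mathcal{A}$ regarded as a local operator system in itself) produces $\lambda_{1}$ and a $\mathcal{CP}$-map $\psi_{\lambda_{1}}$ with $\psi=\psi_{\lambda_{1}}\circ\pi_{\lambda_{1}}^{\mathcal{A}}$. I would then pick $\lambda\geq\lambda_{0},\lambda_{1}$ (possible since $\Lambda$ is upward filtered) and rewrite $\psi=\psi_{\lambda}\circ\pi_{\lambda}^{\mathcal{A}}$ with $\psi_{\lambda}=\psi_{\lambda_{1}}\circ\pi_{\lambda\lambda_{1}}^{\mathcal{A}}$. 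Because $\psi$ and $\pi$ agree on $\mathcal{S}$ and $\mathcal{S}_{\lambda}=\pi_{\lambda}^{\mathcal{A}}(\mathcal{S})$, the $\mathcal{CP}$-map $\psi_{\lambda}$ restricts on $\mathcal{S}_{\lambda}$ to $(\pi_{\lambda_{0}}\circ\pi_{\lambda\lambda_{0}}^{\mathcal{A}})|_{\mathcal{S}_{\lambda}}$; since the latter is a boundary representation for $\mathcal{S}_{\lambda}$ by hypothesis, its unique extension property gives $\psi_{\lambda}=\pi_{\lambda_{0}}\circ\pi_{\lambda\lambda_{0}}^{\mathcal{A}}$, whence $\psi=\pi$.

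The main obstacle I anticipate is in $(2)\Rightarrow(1)$: an arbitrary extension $\psi$ need not factor at the level $\lambda_{0}$ where $\pi$ lives, but only at some a priori unrelated level $\lambda_{1}$. This is exactly why the hypothesis in (2) demands that $\pi_{\lambda_{0}}\circ\pi_{\lambda\lambda_{0}}^{\mathcal{A}}$ be a boundary representation for \emph{every} $\lambda\geq\lambda_{0}$, not merely at $\lambda_{0}$: one must pass to a common refinement $\lambda$ of $\lambda_{0}$ and $\lambda_{1}$ and invoke the classical unique extension property there. Verifying that the induced $\psi_{\lambda}$ is genuinely $\mathcal{CP}$, and conversely that $\psi_{\lambda}\circ\pi_{\lambda}^{\mathcal{A}}$ is local $\mathcal{CP}$, is routine but must be carried out carefully in terms of the defining conditions $\geq_{\lambda}0$ and $=_{\lambda}0$.
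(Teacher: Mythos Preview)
Your proposal is correct and follows essentially the same approach as the paper's proof: both directions hinge on Proposition~\ref{A}, Remark~\ref{11}, and the factorization of local $\mathcal{CP}$-maps through a level $\mathcal{A}_{\lambda}$, with the unique extension property transferred between $\mathcal{A}$ and $\mathcal{A}_{\lambda}$ via pre- and post-composition with $\pi_{\lambda}^{\mathcal{A}}$. The only cosmetic difference is that in $(2)\Rightarrow(1)$ the paper directly chooses the factoring index $\lambda_{1}\geq\lambda_{0}$ (absorbing your passage to a common refinement into that choice), whereas you make the two-step refinement $\lambda\geq\lambda_{0},\lambda_{1}$ explicit; the arguments are otherwise identical.
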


\begin{proof}
$\left( 1\right) \Rightarrow \left( 2\right) $ By Proposition \ref{A}, there
exists $\lambda _{0}\in \Lambda $ and an irreducible representation $\pi
_{\lambda _{0}}:\mathcal{A}_{\lambda _{0}}\mathcal{\rightarrow }B\left(
\mathcal{H}\right) $ such that $\pi =\pi _{\lambda _{0}}\circ \pi _{\lambda
_{0}}^{\mathcal{A}}$.\textit{\ }Let $\lambda \in \Lambda $ with $\lambda
_{0}\leq \lambda $. By Remark \ref{11}, $\pi _{\lambda _{0}}\circ \pi
_{\lambda \lambda _{0}}^{\mathcal{A}}$ is an irreducible representation of $%
\mathcal{A}_{\lambda }$. Let $\varphi _{\lambda }:\mathcal{A}_{\lambda }%
\mathcal{\rightarrow }B\left( \mathcal{H}\right) $ be a $\mathcal{CP}$%
\textbf{-}map such that $\left. \varphi _{\lambda }\right\vert _{\mathcal{S}%
_{\lambda }}=\left. \left( \pi _{\lambda _{0}}\circ \pi _{\lambda \lambda
_{0}}^{\mathcal{A}}\right) \right\vert _{\mathcal{S}_{\lambda }}$. Then $%
\varphi _{\lambda }\circ \pi _{\lambda }^{\mathcal{A}}$ is a local $\mathcal{%
CP}$\textbf{-}map, and
\begin{eqnarray*}
\left. \left( \varphi _{\lambda }\circ \pi _{\lambda }^{\mathcal{A}}\right)
\right\vert _{\mathcal{S}} &=&\left. \left( \pi _{\lambda _{0}}\circ \pi
_{\lambda \lambda _{0}}^{\mathcal{A}}\right) \right\vert _{\mathcal{S}%
_{\lambda }}\circ \left. \pi _{\lambda }^{\mathcal{A}}\right\vert _{\mathcal{%
S}}=\left. \pi _{\lambda _{0}}\right\vert _{\mathcal{S}_{\lambda _{0}}}\circ
\left. \pi _{\lambda _{0}}^{\mathcal{A}}\right\vert _{\mathcal{S}} \\
&=&\left. \left( \pi _{\lambda _{0}}\circ \pi _{\lambda _{0}}^{\mathcal{A}%
}\right) \right\vert _{\mathcal{S}}=\left. \pi \right\vert _{\mathcal{S}}
\end{eqnarray*}%
whence, since $\pi \ $is a local boundary representation for $\mathcal{S}$,$%
\ $we deduce that $\varphi _{\lambda }\circ \pi _{\lambda }^{\mathcal{A}%
}=\pi =\pi _{\lambda _{0}}\circ \pi _{\lambda \lambda _{0}}^{\mathcal{A}%
}\circ \pi _{\lambda }^{\mathcal{A}}$. Consequently, $\varphi _{\lambda
}=\pi _{\lambda _{0}}\circ \pi _{\lambda \lambda _{0}}^{\mathcal{A}}$.
Therefore, $\pi _{\lambda _{0}}\circ \pi _{\lambda \lambda _{0}}^{\mathcal{A}%
}$ is a boundary representation for $\mathcal{S}_{\lambda }$.

$\left( 2\right) \Rightarrow \left( 1\right) $ By Proposition \ref{A}, the
map $\pi :=\pi _{\lambda _{0}}\circ \pi _{\lambda _{0}}^{\mathcal{A}}%
\mathcal{\ }$is an irreducible local representation of $\mathcal{A}$ on $%
\mathcal{H}.$ Let $\varphi :\mathcal{A\rightarrow }B\left( \mathcal{H}%
\right) $ be a local $\mathcal{CP}$\textbf{-}map such that $\left. \varphi
\right\vert _{\mathcal{S}}=\left. \pi \right\vert _{\mathcal{S}}$. Since $%
\varphi $ is a local $\mathcal{CP}$\textbf{-}map and $\Lambda $ is a
directed poset, there exist $\lambda _{1}\in \Lambda ,\lambda _{0}\leq
\lambda _{1}$ and a $\mathcal{CP}$\textbf{-}map $\varphi _{\lambda _{1}}:%
\mathcal{A}_{\lambda _{1}}\rightarrow B\left( \mathcal{H}\right) $ such that
$\varphi =\varphi _{\lambda _{1}}\circ \pi _{\lambda _{1}}^{\mathcal{A}}$.
Then
\begin{eqnarray*}
\left. \varphi _{\lambda _{1}}\right\vert _{\mathcal{S}_{\lambda _{1}}}\circ
\left. \pi _{\lambda _{1}}^{\mathcal{A}}\right\vert _{\mathcal{S}} &=&\left.
\varphi _{\lambda _{1}}\circ \pi _{\lambda _{1}}^{\mathcal{A}}\right\vert _{%
\mathcal{S}}=\left. \varphi \right\vert _{\mathcal{S}}=\left. \pi
\right\vert _{\mathcal{S}} \\
&=&\left. \pi _{\lambda _{0}}\circ \pi _{\lambda _{1}\lambda _{0}}^{\mathcal{%
A}}\circ \pi _{\lambda _{1}}^{\mathcal{A}}\right\vert _{\mathcal{S}}=\left.
\pi _{\lambda _{0}}\circ \pi _{\lambda _{1}\lambda _{0}}^{\mathcal{A}%
}\right\vert _{\mathcal{S}_{\lambda _{1}}}\circ \left. \pi _{\lambda _{1}}^{%
\mathcal{A}}\right\vert _{\mathcal{S}}.
\end{eqnarray*}%
From the above relation and taking into account that $\pi _{\lambda
_{0}}\circ \pi _{\lambda _{1}\lambda _{0}}^{\mathcal{A}}$ is a boundary
representation for $\mathcal{S}_{\lambda _{1}}$, we conclude that $\varphi
_{\lambda _{1}}=\pi _{\lambda _{0}}\circ \pi _{\lambda _{1}\lambda _{0}}^{%
\mathcal{A}}$. Then $\varphi =\varphi _{\lambda _{1}}\circ \pi _{\lambda
_{1}}^{\mathcal{A}}=\pi _{\lambda _{0}}\circ \pi _{\lambda _{1}\lambda
_{0}}^{\mathcal{A}}\circ \pi _{\lambda _{1}}^{\mathcal{A}}=\pi $. Therefore,
$\pi \ $is a local boundary representation for $\mathcal{S}$.
\end{proof}

\begin{remark}
\textit{\ Since a} \textit{quantized Fr\'{e}chet domain }is a commutative
domain, the local boundary representations for a local \textit{operator
system } $\mathcal{S}$ \textit{in a Fr\'{e}chet locally }$C^{\ast }$-algebra considered by Arunkumar \cite{Ar} are local boundary representations
on Hilbert spaces (Proposition \ref{1}), and according to the above proposition, the proofs of the main theorems \cite[Theorems 5.7; 5.10 and 5.11]{Ar} are reduced to the case of boundary representations for operator systems   (see the following section).
\end{remark}

\begin{proposition}
Let $\mathcal{S}$ be a local operator system in $\mathcal{A}$ such that $%
\mathcal{S}$ generates $\mathcal{A}$ and $\pi :\mathcal{A\rightarrow }%
C^{\ast }\left( \mathcal{D}_{\mathcal{E}}\right) $ be a local boundary
representation for $\mathcal{S}.$ If $\{\mathcal{H},\mathcal{E}=\{\mathcal{H}%
_{\iota }\}_{\iota \in \Upsilon },\mathcal{D}_{\mathcal{E}}\}$ is a
commutative domain, then $\left. \pi \right\vert _{\mathcal{S}}:\mathcal{%
S\rightarrow }C^{\ast }\left( \mathcal{D}_{\mathcal{E}}\right) $ is a pure
local $\mathcal{CP}$-map.
\end{proposition}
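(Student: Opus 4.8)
The plan is to reduce the statement to the corresponding fact for a boundary representation of the operator system $\mathcal{S}_{\lambda _{0}}$ in the $C^{\ast }$-algebra $\mathcal{A}_{\lambda _{0}}$, and then to invoke the characterizations of Propositions \ref{1}, \ref{3} and \ref{2}. First I would pass from the quantized domain to a Hilbert space. Since $\pi $ is a local boundary representation it is, by definition, irreducible; because $\{\mathcal{H},\mathcal{E},\mathcal{D}_{\mathcal{E}}\}$ is a commutative domain, Proposition \ref{1} gives $\mathcal{H}_{\iota }=\mathcal{H}$ for all $\iota \in \Upsilon $ and $C^{\ast }(\mathcal{D}_{\mathcal{E}})=B(\mathcal{H})$. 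Hence $\pi :\mathcal{A}\rightarrow B(\mathcal{H})$ is an irreducible local representation and $\left. \pi \right\vert _{\mathcal{S}}:\mathcal{S}\rightarrow B(\mathcal{H})$ is a unital local $\mathcal{CP}$-map, so the hypotheses of Propositions \ref{2} and \ref{3} are met.

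Next I would apply the implication $(1)\Rightarrow (2)$ of Proposition \ref{3}: there exist $\lambda _{0}\in \Lambda $ and a boundary representation $\pi _{\lambda _{0}}:\mathcal{A}_{\lambda _{0}}\rightarrow B(\mathcal{H})$ for $\mathcal{S}_{\lambda _{0}}$ such that $\pi =\pi _{\lambda _{0}}\circ \pi _{\lambda _{0}}^{\mathcal{A}}$. Restricting to $\mathcal{S}$ and using $\pi _{\lambda _{0}}^{\mathcal{A}}(\mathcal{S})=\mathcal{S}_{\lambda _{0}}$ yields the factorization $\left. \pi \right\vert _{\mathcal{S}}=\left. \pi _{\lambda _{0}}\right\vert _{\mathcal{S}_{\lambda _{0}}}\circ \left. \pi _{\lambda _{0}}^{\mathcal{A}}\right\vert _{\mathcal{S}}$. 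By the implication $(2)\Rightarrow (1)$ of Proposition \ref{2}, it therefore suffices to prove that the $\mathcal{CP}$-map $\left. \pi _{\lambda _{0}}\right\vert _{\mathcal{S}_{\lambda _{0}}}$ is pure, which is a purely $C^{\ast }$-algebraic statement about the operator system $\mathcal{S}_{\lambda _{0}}$.

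The remaining step, which I expect to be the main obstacle, is the classical fact that the restriction of a boundary representation to its operator system is a pure $\mathcal{CP}$-map. To prove it, let $\psi _{0}$ be a $\mathcal{CP}$-map on $\mathcal{S}_{\lambda _{0}}$ with $\psi _{0}\leq \left. \pi _{\lambda _{0}}\right\vert _{\mathcal{S}_{\lambda _{0}}}$, so that $\chi _{0}:=\left. \pi _{\lambda _{0}}\right\vert _{\mathcal{S}_{\lambda _{0}}}-\psi _{0}$ is also $\mathcal{CP}$. By Arveson's extension theorem I would extend $\psi _{0}$ and $\chi _{0}$ to $\mathcal{CP}$-maps $\widetilde{\psi }_{0}$ and $\widetilde{\chi }_{0}$ on $\mathcal{A}_{\lambda _{0}}$; their sum is then a unital $\mathcal{CP}$-extension of $\left. \pi _{\lambda _{0}}\right\vert _{\mathcal{S}_{\lambda _{0}}}$, so the unique extension property of the boundary representation $\pi _{\lambda _{0}}$ forces $\widetilde{\psi }_{0}+\widetilde{\chi }_{0}=\pi _{\lambda _{0}}$, whence $0\leq \widetilde{\psi }_{0}\leq \pi _{\lambda _{0}}$ on $\mathcal{A}_{\lambda _{0}}$. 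Here the key point is that domination is transferred from $\mathcal{S}_{\lambda _{0}}$ up to $\mathcal{A}_{\lambda _{0}}$ precisely by the unique extension property, so that the Radon--Nikodym machinery becomes applicable.

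Finally, Arveson's Radon--Nikodym theorem (applied to the minimal Stinespring dilation of the representation $\pi _{\lambda _{0}}$, which is $\pi _{\lambda _{0}}$ itself with isometry the identity) produces a positive operator $T\in \pi _{\lambda _{0}}(\mathcal{A}_{\lambda _{0}})^{\prime }$ with $0\leq T\leq I$ and $\widetilde{\psi }_{0}(\cdot )=\pi _{\lambda _{0}}(\cdot )T$; since $\pi _{\lambda _{0}}$ is irreducible, $T=t_{0}I$ for some $t_{0}\in \lbrack 0,1]$, so $\widetilde{\psi }_{0}=t_{0}\pi _{\lambda _{0}}$ and hence $\psi _{0}=t_{0}\left. \pi _{\lambda _{0}}\right\vert _{\mathcal{S}_{\lambda _{0}}}$. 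This shows $\left. \pi _{\lambda _{0}}\right\vert _{\mathcal{S}_{\lambda _{0}}}$ is pure, and combining it with the factorization above and Proposition \ref{2} gives that $\left. \pi \right\vert _{\mathcal{S}}$ is a pure local $\mathcal{CP}$-map. Alternatively, this last $C^{\ast }$-algebraic purity statement may simply be cited from the theory of boundary representations for operator systems.
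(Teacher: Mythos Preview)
Your proposal is correct and follows essentially the same route as the paper: reduce to $B(\mathcal{H})$ via Proposition~\ref{1}, factor through a boundary representation $\pi_{\lambda_0}$ of $\mathcal{A}_{\lambda_0}$ via Proposition~\ref{3}, use that $\left.\pi_{\lambda_0}\right|_{\mathcal{S}_{\lambda_0}}$ is pure, and conclude via Proposition~\ref{2}. The only difference is that the paper simply cites Arveson's \cite[Lemma~2.4.3]{A} for the purity of $\left.\pi_{\lambda_0}\right|_{\mathcal{S}_{\lambda_0}}$, whereas you reprove it via Arveson's extension and Radon--Nikodym theorems (and you yourself note this step can be cited).
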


\begin{proof}
By Remark \ref{C}, $C^{\ast }\left( \mathcal{D}_{\mathcal{E}}\right)
=B\left( \mathcal{H}\right) .$ Since $\pi $ is a local boundary
representation for $\mathcal{S}$, by Proposition \ref{3}, there exist $%
\lambda _{0}\in \Lambda $ and a boundary representation $\pi _{\lambda _{0}}:%
\mathcal{A}_{\lambda _{0}}\mathcal{\rightarrow }B\left( \mathcal{H}\right) $
for $\mathcal{S}_{\lambda _{0}}$, such that $\pi =\pi _{\lambda _{0}}\circ
\pi _{\lambda _{0}}^{\mathcal{A}}$. By \cite[Lemma 2.4.3]{A}, since $\pi
_{\lambda _{0}}$ is a boundary representation for $\mathcal{S}_{\lambda
_{0}},\left. \pi _{\lambda _{0}}\right\vert _{\mathcal{S}_{\lambda _{0}}}:%
\mathcal{S}_{\lambda _{0}}\mathcal{\rightarrow }B\left( \mathcal{H}\right) \
$is pure, whence, according Proposition \ref{2}, it follows that $\left. \pi
_{\lambda _{0}}\right\vert _{\mathcal{S}_{\lambda _{0}}}\circ \left. \pi
_{\lambda _{0}}^{\mathcal{A}}\right\vert _{\mathcal{S}}\ $is a pure local $%
\mathcal{CP}$-map. But $\left. \pi \right\vert _{\mathcal{S}}=\left. \pi
_{\lambda _{0}}\right\vert _{\mathcal{S}_{\lambda _{0}}}\circ \left. \pi
_{\lambda _{0}}^{\mathcal{A}}\right\vert _{\mathcal{S}}$, and the
proposition is proved.
\end{proof}

\begin{remark}
Since the quantized Fr\'{e}chet domains are commutative domains, Theorem
5.10 \cite{Ar} is a particular case of the above proposition. \textit{\ }
\end{remark}

\section{Local boundary representations on Hilbert spaces}

Let $\mathcal{A}$ be a unital locally $C^{\ast }$-algebra with the topology
defined by the family of $C^{\ast }$-seminorms $\{p_{\lambda }\}_{\lambda
\in \Lambda }$ and $\mathcal{S}$ be a local operator systems in $\mathcal{A}$. If $\mathcal{B}$ is the locally $C^{\ast }$-subalgebra of $\mathcal{A}$
generated by $\mathcal{S}$, then, clearly, for each $\lambda \in \Lambda $, $%
\mathcal{B}_{\lambda }$ is $C^{\ast }$-algebra generated by $\mathcal{S}%
_{\lambda }.$

By \cite[Theorem 3.3]{JM22}, a local $\mathcal{CP}$-map $\varphi :\mathcal{%
S\rightarrow }B\left( \mathcal{H}\right) $ extends to a $\mathcal{CP}$-map $%
\widetilde{\varphi }:\mathcal{A\rightarrow }B\left( \mathcal{H}\right) .$

Arveson \cite{A} calls the set of all boundary representations of an
operator system the non-commutative Choquet boundary. The collection of all
local boundary representations on Hilbert spaces of a local operator system
$\mathcal{S}$ is called the \textit{local non-commutative Choquet boundary for $\mathcal{S}$.}

\begin{remark}
It is known that the Choquet boundary for the unital commutative $C^{\ast }$%
-algebra $C(X)$ of all continuous complex values functions on a Hausdorff
compact space $X$ is $X.$

Let $\{X_{n};i_{nm}:X_{n}\hookrightarrow X_{m};n\leq m;n,m\in \mathbb{N}\}$
be an inductive system of Hausdorff compact spaces and $X:=\lim\limits_{%
\underset{n}{\rightarrow }}X_{n}.$ Then $C(X):=\{f:X\rightarrow \mathbb{C};f$
is continuous$\}$ is a unital commutative \textit{Fr\'{e}chet locally }$%
C^{\ast }$-algebra with respect to the topology defined by the family of $%
C^{\ast }$-seminorms $\{p_{n}\}_{n\in \mathbb{N}}$, where $p_{n}\left(
f\right) =\sup \{\left\vert f\left( x\right) \right\vert ;x\in X_{n}\}$.
Moreover, $C(X)$ can be identified with $\lim\limits_{\underset{n}{%
\leftarrow }}C\left( X_{n}\right) $ \cite{Ph}. According to Proposition \ref%
{3}, as in the case of unital commutative $C^{\ast }$-algebras, we obtained
that the local Choquet boundary for $C(X)$ is $X.$
\end{remark}

The following theorem is a local version of \cite[Theorem 2.1.2]{A}.

\begin{theorem}
Let $\mathcal{A}_{1}$ and $\mathcal{A}_{2}$ be two locally $C^{\ast }$%
-algebras with the topology defined by the family of $C^{\ast }$-seminorms $%
\{p_{\lambda }\}_{\lambda \in \Lambda },$ respectively $\{q_{\lambda
}\}_{\lambda \in \Lambda },\mathcal{S}_{1}$ and $\mathcal{S}_{2}$ be two
local operator systems such that $\mathcal{S}_{1}$ generated $\mathcal{A}%
_{1} $ and $\mathcal{S}_{2}$ generated $\mathcal{A}_{2}$. If $\Phi :\mathcal{%
S}_{1}\rightarrow \mathcal{S}_{2}$ is a unital surjective local completely
isometric linear map, then for each local boundary representation $\pi _{1}:$
$\mathcal{A}_{1}\rightarrow B(\mathcal{H})$ for $\mathcal{S}_{1},$ there
exists a local boundary representation $\pi _{2}:$ $\mathcal{A}%
_{2}\rightarrow B(\mathcal{H})$ for $\mathcal{S}_{2}$ such that $\pi
_{2}\circ \Phi =\left. \pi _{1}\right\vert _{\mathcal{S}_{1}}.$
\end{theorem}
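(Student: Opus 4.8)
The plan is to transport the problem to the $C^{\ast }$-algebra level by means of Proposition \ref{3}, apply the classical Arveson transfer theorem \cite[Theorem 2.1.2]{A} at each admissible level, and then reassemble the resulting representations into a local boundary representation using Proposition \ref{3} once more.

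First I would record the level-wise consequences of the hypotheses on $\Phi $. Since $\Phi $ is a unital surjective local completely isometric map and both seminorm families are indexed by the same set $\Lambda $, for each $\lambda \in \Lambda $ the map $\Phi $ descends to a unital surjective complete isometry $\Phi _{\lambda }:(\mathcal{S}_{1})_{\lambda }\rightarrow (\mathcal{S}_{2})_{\lambda }$ satisfying $\Phi _{\lambda }\circ \left. \pi _{\lambda }^{\mathcal{A}_{1}}\right\vert _{\mathcal{S}_{1}}=\left. \pi _{\lambda }^{\mathcal{A}_{2}}\right\vert _{\mathcal{S}_{2}}\circ \Phi $. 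Composing this identity with $\pi _{\lambda \lambda _{0}}^{\mathcal{A}_{2}}$ and cancelling the surjection $\left. \pi _{\lambda }^{\mathcal{A}_{1}}\right\vert _{\mathcal{S}_{1}}$ yields the compatibility $\left. \pi _{\lambda \lambda _{0}}^{\mathcal{A}_{2}}\right\vert _{(\mathcal{S}_{2})_{\lambda }}\circ \Phi _{\lambda }=\Phi _{\lambda _{0}}\circ \left. \pi _{\lambda \lambda _{0}}^{\mathcal{A}_{1}}\right\vert _{(\mathcal{S}_{1})_{\lambda }}$ for all $\lambda _{0}\leq \lambda $, which is the coherence data I will need to glue the levels together.

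Next, since $\pi _{1}$ is a local boundary representation for $\mathcal{S}_{1}$, Proposition \ref{3} furnishes $\lambda _{0}\in \Lambda $ and a boundary representation $(\pi _{1})_{\lambda _{0}}:(\mathcal{A}_{1})_{\lambda _{0}}\rightarrow B(\mathcal{H})$ for $(\mathcal{S}_{1})_{\lambda _{0}}$ with $\pi _{1}=(\pi _{1})_{\lambda _{0}}\circ \pi _{\lambda _{0}}^{\mathcal{A}_{1}}$, and moreover $(\pi _{1})_{\lambda _{0}}\circ \pi _{\lambda \lambda _{0}}^{\mathcal{A}_{1}}$ is a boundary representation for $(\mathcal{S}_{1})_{\lambda }$ for every $\lambda \geq \lambda _{0}$. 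Applying \cite[Theorem 2.1.2]{A} to the unital complete isometry $\Phi _{\lambda _{0}}$ produces a boundary representation $(\pi _{2})_{\lambda _{0}}:(\mathcal{A}_{2})_{\lambda _{0}}\rightarrow B(\mathcal{H})$ for $(\mathcal{S}_{2})_{\lambda _{0}}$ with $(\pi _{2})_{\lambda _{0}}\circ \Phi _{\lambda _{0}}=\left. (\pi _{1})_{\lambda _{0}}\right\vert _{(\mathcal{S}_{1})_{\lambda _{0}}}$. I then set $\pi _{2}:=(\pi _{2})_{\lambda _{0}}\circ \pi _{\lambda _{0}}^{\mathcal{A}_{2}}$, which by Proposition \ref{A} is an irreducible local representation of $\mathcal{A}_{2}$; the required identity $\pi _{2}\circ \Phi =\left. \pi _{1}\right\vert _{\mathcal{S}_{1}}$ then follows at once on $\mathcal{S}_{1}$ from the level-$\lambda _{0}$ intertwiners.

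It remains to verify, through the criterion of Proposition \ref{3}, that $(\pi _{2})_{\lambda _{0}}\circ \pi _{\lambda \lambda _{0}}^{\mathcal{A}_{2}}$ is a boundary representation for $(\mathcal{S}_{2})_{\lambda }$ for every $\lambda \geq \lambda _{0}$; this is the step I expect to be the main obstacle. For fixed $\lambda \geq \lambda _{0}$ I would invoke \cite[Theorem 2.1.2]{A} again, now for $\Phi _{\lambda }$ and the boundary representation $(\pi _{1})_{\lambda _{0}}\circ \pi _{\lambda \lambda _{0}}^{\mathcal{A}_{1}}$, obtaining a boundary representation $\rho $ for $(\mathcal{S}_{2})_{\lambda }$ with $\rho \circ \Phi _{\lambda }=\left. (\pi _{1})_{\lambda _{0}}\circ \pi _{\lambda \lambda _{0}}^{\mathcal{A}_{1}}\right\vert _{(\mathcal{S}_{1})_{\lambda }}$. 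Using the compatibility relation from the first step together with the level-$\lambda _{0}$ intertwiner, a direct computation shows that $\rho $ and $(\pi _{2})_{\lambda _{0}}\circ \pi _{\lambda \lambda _{0}}^{\mathcal{A}_{2}}$ agree on $(\mathcal{S}_{2})_{\lambda }$ (both send $\Phi _{\lambda }(t)$ to $(\pi _{1})_{\lambda _{0}}(\pi _{\lambda \lambda _{0}}^{\mathcal{A}_{1}}(t))$, and $\Phi _{\lambda }$ is onto). Since $\rho $ is a boundary representation it has the unique extension property, while $(\pi _{2})_{\lambda _{0}}\circ \pi _{\lambda \lambda _{0}}^{\mathcal{A}_{2}}$ is a $\ast $-representation and hence a completely positive extension of $\left. \rho \right\vert _{(\mathcal{S}_{2})_{\lambda }}$; uniqueness forces the two to coincide, so $(\pi _{2})_{\lambda _{0}}\circ \pi _{\lambda \lambda _{0}}^{\mathcal{A}_{2}}$ is a boundary representation for $(\mathcal{S}_{2})_{\lambda }$. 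Proposition \ref{3} then concludes that $\pi _{2}$ is a local boundary representation for $\mathcal{S}_{2}$. The delicate points are confirming that $\Phi $ genuinely descends to complete isometries at every level and that the connecting $\ast $-morphisms intertwine the $\Phi _{\lambda }$ correctly, for without this coherence the level-wise boundary representations delivered by Arveson's theorem need not fit together across $\Lambda $.
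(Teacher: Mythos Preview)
Your proposal is correct and follows essentially the same route as the paper's proof: both descend $\Phi$ to complete isometries $\Phi_{\lambda}$, invoke Proposition~\ref{3} to obtain $(\pi_{1})_{\lambda_{0}}$, apply \cite[Theorem 2.1.2]{A} at each level $\lambda\geq\lambda_{0}$, and then use the unique extension property of the resulting boundary representation (your $\rho$, the paper's $(\pi_{2})_{\lambda}$) to identify it with $(\pi_{2})_{\lambda_{0}}\circ\pi_{\lambda\lambda_{0}}^{\mathcal{A}_{2}}$, before closing with Proposition~\ref{3}. The only cosmetic difference is the order of presentation (you define $\pi_{2}$ before verifying the higher levels, the paper after), and you are slightly more explicit about the coherence relation $\pi_{\lambda\lambda_{0}}^{\mathcal{A}_{2}}\circ\Phi_{\lambda}=\Phi_{\lambda_{0}}\circ\pi_{\lambda\lambda_{0}}^{\mathcal{A}_{1}}$, which the paper uses without isolating it.
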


\begin{proof}
Since $\Phi :\mathcal{S}_{1}\rightarrow \mathcal{S}_{2}$ is a unital
surjective local completely isometric linear map, for each $\lambda \in
\Lambda ,$ there exists a unital surjective local completely isometric
linear map $\Phi _{\lambda }:\left( \mathcal{S}_{1}\right) _{\lambda
}\rightarrow \left( \mathcal{S}_{2}\right) _{\lambda }$ such that $\pi
_{\lambda }^{\mathcal{A}_{2}}\circ \Phi =\Phi _{\lambda }\circ \left. \pi
_{\lambda }^{\mathcal{A}_{1}}\right\vert _{\mathcal{S}_{1}}.$

Let $\pi _{1}:$ $\mathcal{A}_{1}\rightarrow B(\mathcal{H})$ be a local
boundary representation for $\mathcal{S}_{1}$. By Proposition \ref{3}, there
exist $\lambda _{0}\in \Lambda $ and a boundary representation $\left( \pi
_{1}\right) _{\lambda _{0}}:\left( \mathcal{A}_{1}\right) _{\lambda _{0}}%
\mathcal{\rightarrow }B\left( \mathcal{H}\right) $ for $\left( \mathcal{S}%
_{1}\right) _{\lambda _{0}}$ such that $\pi _{1}=\left( \pi _{1}\right)
_{\lambda _{0}}\circ \pi _{\lambda _{0}}^{\mathcal{A}_{1}}$ and for each $\lambda \in \Lambda$ with $\lambda_0\ \leq \lambda$, $\left(\pi_1\right)_{\lambda_0}\circ \pi_{\lambda \lambda_0}^{\mathcal{A}_1}$ is a boundary representation for $\left(\mathcal{S}_1\right)_\lambda$. Then, by \cite[%
Theorem 2.1.2]{A}, for each $\lambda \in \Lambda$ with $\lambda_0 \leq \lambda $, there exists a boundary representation $\left( \pi_{2}\right) _{\lambda }:\left( \mathcal{A}_{2}\right) _{\lambda}
\rightarrow B\left( \mathcal{H}\right) $ for $\left(\mathcal{S}_{2}\right)
_{\lambda}$
 such that $\left(\pi_{2}\right)_{\lambda}\circ \Phi_{\lambda}=\left. \left(\pi _{1}\right)_{\lambda _{0}}\circ \pi_{\lambda\lambda_0}^{\mathcal{A}_1}\right\vert_{\left( \mathcal{S}_{1}\right) _{\lambda}}$.
  For each $\lambda \in \Lambda $ with $\lambda_0 \leq \lambda $, $\left(\pi_2\right)_{\lambda_0}\circ \pi_{\lambda \lambda_0}^{\mathcal{A}_2}$ is an irreducible representation of $\left(\mathcal{A}_2\right)_\lambda$ on $\mathcal{H}$. Moreover, 

\begin{equation*}
\left(\pi_2\right)_{\lambda_0}\circ \pi_{\lambda \lambda_0}^{\mathcal{A}_2}\circ \Phi_\lambda
=\left(\pi_2\right)_{\lambda_0}\circ \Phi_{\lambda_0}\circ \left. \pi_{\lambda \lambda_0}^{\mathcal{A}_1}\right\vert_{{\left(\mathcal{S}_1\right)}_{\lambda}}=\left(\pi_1\right)_{\lambda_0}\circ  \left. \pi_{\lambda \lambda_0}^{\mathcal{A}_1}\right\vert_{\left(\mathcal{S}_1\right)_\lambda}=\left(\pi_2\right)_{\lambda}\circ \Phi_\lambda 
\end{equation*}  whence we deduce that $\left.\left(\pi_2 \right)_{\lambda_0}\circ \pi_{\lambda \lambda_0}^{\mathcal{A}_2}\right\vert_{\left(\mathcal{S}_2\right)_\lambda}=\left.\left(\pi_2 \right)_{\lambda} \right\vert_{\left(\mathcal{S}_2\right)_\lambda}$, and since  $\left(\pi_2\right)_\lambda$ is a boundary representation for $ \left(\mathcal{S}_2\right)_\lambda$, it follows that $\left(\pi_2 \right)_{\lambda_0}\circ \pi_{\lambda \lambda_0}^{\mathcal{A}_2}=\left(\pi_2 \right)_\lambda$. Therefore, $\left(\pi_2 \right)_{\lambda_0}\circ \pi_{\lambda \lambda_0}^{\mathcal{A}_2}$ is a boundary representation for  $ \left(\mathcal{S}_2\right)_\lambda$. Then, $\pi_2:=\left(\pi_2\right)_{\lambda_0} \circ \pi_{\lambda_0}^{\mathcal{A}_2}$ is a local boundary representation for $\mathcal{S}_2$, and moreover,  

\begin{equation*}
\pi _{2}\circ \Phi =\left( \pi _{2}\right) _{\lambda _{0}}\circ \pi
_{\lambda _{0}}^{\mathcal{A}_{2}}\circ \Phi =\left( \pi _{2}\right)
_{\lambda _{0}}\circ \Phi _{\lambda_0 }\circ \left. \pi _{\lambda_0 }^{\mathcal{A%
}_{1}}\right\vert _{\mathcal{S}_{1}}=\left. \left( \pi _{1}\right) _{\lambda
_{0}}\right\vert _{\left( \mathcal{S}_{1}\right) _{\lambda _{0}}}\circ
\left. \pi _{\lambda_0 }^{\mathcal{A}_{1}}\right\vert _{\mathcal{S}%
_{1}}=\left. \pi _{1}\right\vert _{\mathcal{S}_{1}}.
\end{equation*}
\end{proof}

\begin{remark}
Since the local boundary representations for a local \textit{\ operator
system }$\mathcal{S}$ \textit{in a Fr\'{e}chet locally }$C^{\ast }$-algebra
on \textit{quantized Fr\'{e}chet domains }are local boundary representations
on Hilbert spaces (Proposition \ref{1}), Theorem 5.7 \cite{Ar} is a
particular case of the above theorem.
\end{remark}

A local representation $\pi :\mathcal{A\rightarrow }B\left( \mathcal{H}%
\right) $ is a \textit{finite representation} for $\mathcal{S}$ if for every
isometry $V\in B\left( \mathcal{H}\right) $, the condition $\pi \left(
a\right) =V^{\ast }$ $\pi \left( a\right) V$ for all $a\in \mathcal{S}$
implies $V$ is unitary. It is easy to check that a local representation $\pi
:\mathcal{A\rightarrow }B\left( \mathcal{H}\right) $ is a finite
representation for $\mathcal{S}$ if and only if there exist $\lambda _{0}\in
\Lambda $ and a finite representation for $\mathcal{S}_{\lambda _{0}}$, $\pi
_{\lambda _{0}}:\mathcal{A}_{\lambda _{0}}\mathcal{\rightarrow }B\left(
\mathcal{H}\right) $, such that $\pi =\pi _{\lambda _{0}}\circ \pi _{\lambda
_{0}}^{\mathcal{A}}$. Clearly, if $\pi _{\lambda _{0}}:\mathcal{A}_{\lambda
_{0}}\mathcal{\rightarrow }B\left( \mathcal{H}\right) $ is a finite
representation for $\mathcal{S}_{\lambda _{0}}$, then $\pi _{\lambda
_{0}}\circ \pi _{\lambda \lambda _{0}}^{\mathcal{A}}:\mathcal{A}_{\lambda }%
\mathcal{\rightarrow }B\left( \mathcal{H}\right) $ is a finite
representation for $\mathcal{S}_{\lambda }$ for all $\lambda \in \Lambda $
with $\lambda _{0}\leq \lambda .$

We say that $\mathcal{S}$ \textit{separates} the irreducible local
representation $\pi :\mathcal{A\rightarrow }B\left( \mathcal{H}\right) $ if
for any irreducible local representation $\rho :\mathcal{A\rightarrow }%
B\left( \mathcal{K}\right) $ and an isometry $V\in B\left( \mathcal{H},%
\mathcal{K}\right) $ such that $\pi \left( a\right) =V^{\ast }\rho \left(
a\right) V$ for all $a\in \mathcal{S}$ implies that $\pi $ and $\rho $ are
unitarily equivalent.

\begin{lemma}
\label{4}Let $\mathcal{S}$ be a local operator system in $\mathcal{A}$ such
that $\mathcal{S}$ generates $\mathcal{A}$ and $\pi :\mathcal{A\rightarrow }%
B\left( \mathcal{H}\right) $ be an irreducible local representation. Then
the following statements are equivalent:

\begin{enumerate}
\item $\mathcal{S}$ separates $\pi ;$

\item There exist $\lambda _{0}\in \Lambda $ and an irreducible
representation $\pi _{\lambda _{0}}:\mathcal{A}_{\lambda _{0}}\mathcal{%
\rightarrow }B\left( \mathcal{H}\right) $ such that $\pi =\pi _{\lambda
_{0}}\circ \pi _{\lambda _{0}}^{\mathcal{A}}$ and for every $\lambda \in
\Lambda $ with $\lambda _{0}\leq \lambda ,$ $\mathcal{S}_{\lambda }$
separates $\pi _{\lambda _{0}}\circ \pi _{\lambda \lambda _{0}}^{\mathcal{A}%
}.$
\end{enumerate}
\end{lemma}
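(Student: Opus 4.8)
The plan is to run the same reduction-to-$C^{\ast}$-level argument used in Proposition \ref{3}, with the unique-extension property replaced by the separation property. Throughout I write $\sigma_{\lambda}:=\pi_{\lambda_{0}}\circ\pi_{\lambda\lambda_{0}}^{\mathcal{A}}$ for $\lambda\geq\lambda_{0}$, which is an irreducible representation of $\mathcal{A}_{\lambda}$ on $\mathcal{H}$ by Remark \ref{11}, and I use repeatedly the cocycle identity $\pi_{\lambda_{0}}^{\mathcal{A}}=\pi_{\lambda\lambda_{0}}^{\mathcal{A}}\circ\pi_{\lambda}^{\mathcal{A}}$, the surjectivity of $\pi_{\lambda}^{\mathcal{A}}$ together with $\pi_{\lambda}^{\mathcal{A}}(\mathcal{S})=\mathcal{S}_{\lambda}$, and the estimate $\left\Vert\tau(\pi_{\lambda}^{\mathcal{A}}(a))\right\Vert\leq p_{\lambda}(a)$ valid for any representation $\tau$ of $\mathcal{A}_{\lambda}$, so that compositions of the form $\tau\circ\pi_{\lambda}^{\mathcal{A}}$ are automatically local representations.

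For $(1)\Rightarrow(2)$, I first invoke Proposition \ref{A} to obtain $\lambda_{0}$ and an irreducible $\pi_{\lambda_{0}}:\mathcal{A}_{\lambda_{0}}\to B(\mathcal{H})$ with $\pi=\pi_{\lambda_{0}}\circ\pi_{\lambda_{0}}^{\mathcal{A}}$, so that $\pi=\sigma_{\lambda}\circ\pi_{\lambda}^{\mathcal{A}}$ for every $\lambda\geq\lambda_{0}$. To show $\mathcal{S}_{\lambda}$ separates $\sigma_{\lambda}$, I take an arbitrary irreducible representation $\tau:\mathcal{A}_{\lambda}\to B(\mathcal{K})$ and isometry $V\in B(\mathcal{H},\mathcal{K})$ with $\sigma_{\lambda}(x)=V^{\ast}\tau(x)V$ for all $x\in\mathcal{S}_{\lambda}$, and I lift them to the local level by setting $\rho:=\tau\circ\pi_{\lambda}^{\mathcal{A}}$, an irreducible local representation of $\mathcal{A}$ by Proposition \ref{A}. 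Since $\pi(a)=\sigma_{\lambda}(\pi_{\lambda}^{\mathcal{A}}(a))$ and $\rho(a)=\tau(\pi_{\lambda}^{\mathcal{A}}(a))$, the hypothesis on $V$ together with $\pi_{\lambda}^{\mathcal{A}}(\mathcal{S})=\mathcal{S}_{\lambda}$ gives $\pi(a)=V^{\ast}\rho(a)V$ for all $a\in\mathcal{S}$. Because $\mathcal{S}$ separates $\pi$, there is a unitary $U:\mathcal{H}\to\mathcal{K}$ with $\rho(a)=U\pi(a)U^{\ast}$ for all $a\in\mathcal{A}$; precomposing with $\pi_{\lambda}^{\mathcal{A}}$ and using its surjectivity yields $\tau(x)=U\sigma_{\lambda}(x)U^{\ast}$ for all $x\in\mathcal{A}_{\lambda}$, so $\sigma_{\lambda}$ and $\tau$ are unitarily equivalent, as required.

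For $(2)\Rightarrow(1)$, I take an arbitrary irreducible local representation $\rho:\mathcal{A}\to B(\mathcal{K})$ and isometry $V$ with $\pi(a)=V^{\ast}\rho(a)V$ on $\mathcal{S}$, apply Proposition \ref{A} to write $\rho=\rho_{\mu_{0}}\circ\pi_{\mu_{0}}^{\mathcal{A}}$ with $\rho_{\mu_{0}}$ irreducible, and then use directedness of $\Lambda$ to pick $\lambda_{1}\geq\lambda_{0},\mu_{0}$. Factoring both $\pi$ and $\rho$ through $\pi_{\lambda_{1}}^{\mathcal{A}}$ produces irreducible representations $\sigma_{\lambda_{1}}$ and $\tau_{\lambda_{1}}:=\rho_{\mu_{0}}\circ\pi_{\lambda_{1}\mu_{0}}^{\mathcal{A}}$ of $\mathcal{A}_{\lambda_{1}}$, and the intertwining hypothesis transfers to $\sigma_{\lambda_{1}}(x)=V^{\ast}\tau_{\lambda_{1}}(x)V$ on $\mathcal{S}_{\lambda_{1}}$. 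Since by hypothesis $\mathcal{S}_{\lambda_{1}}$ separates $\sigma_{\lambda_{1}}$, I obtain a unitary $U$ with $\tau_{\lambda_{1}}(x)=U\sigma_{\lambda_{1}}(x)U^{\ast}$, and composing with $\pi_{\lambda_{1}}^{\mathcal{A}}$ gives $\rho(a)=U\pi(a)U^{\ast}$; hence $\pi$ and $\rho$ are unitarily equivalent and $\mathcal{S}$ separates $\pi$.

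The bookkeeping (cocycle identities for the connecting morphisms, surjectivity of $\pi_{\lambda}^{\mathcal{A}}$, and automatic continuity of the lifts) is mechanical once set up. The only genuine point requiring care is the direction $(2)\Rightarrow(1)$: an arbitrary competitor $\rho$ need not a priori factor through the \emph{same} index $\lambda_{0}$, so the passage to a common upper bound $\lambda_{1}$ via directedness — and the verification that moving from $\lambda_{0}$ to $\lambda_{1}$ preserves both irreducibility (Remark \ref{11}) and the separation hypothesis — is what makes the local statement genuinely stronger than a single $C^{\ast}$-level statement. I expect this to be the main obstacle, exactly as the ``for all $\lambda\geq\lambda_{0}$'' clause was the crux in Proposition \ref{3}.
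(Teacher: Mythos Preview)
Your proof is correct and follows essentially the same route as the paper's. The only cosmetic differences are that you make explicit the unitary $U$ implementing the equivalence and the passage through a common upper bound $\lambda_{1}\geq\lambda_{0},\mu_{0}$ via directedness, whereas the paper compresses these into the single phrase ``there exist $\lambda_{1}\in\Lambda$ with $\lambda_{0}\leq\lambda_{1}$ and an irreducible representation $\rho_{\lambda_{1}}$ such that $\rho=\rho_{\lambda_{1}}\circ\pi_{\lambda_{1}}^{\mathcal{A}}$'' and then argues directly at level $\lambda_{1}$.
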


\begin{proof}
Since $\pi :\mathcal{%
A\rightarrow }B\left( \mathcal{H}\right) $ is an irreducible local
representation, by Proposition \ref{A} and Remark \ref{11}, there exist $%
\lambda _{0}\in \Lambda $ and an irreducible representation of $\mathcal{A}%
_{\lambda _{0}}$, $\pi _{\lambda _{0}}:\mathcal{A}_{\lambda _{0}}\mathcal{%
\rightarrow }B\left( \mathcal{H}\right) $, such that $\pi =\pi _{\lambda
_{0}}\circ \pi _{\lambda _{0}}^{\mathcal{A}},$ and for each $\lambda \in
\Lambda $ with $\lambda _{0}\leq \lambda ,$ $\pi _{\lambda _{0}}\circ \pi
_{\lambda \lambda _{0}}^{\mathcal{A}}$ is an irreducible representation of $%
\mathcal{A}_{\lambda }$. 

$\left( 1\right) \Rightarrow \left( 2\right) $ Let $\lambda \in \Lambda $ with $\lambda _{0}\leq
\lambda ,$ $\rho _{\lambda }:\mathcal{A}_{\lambda }\mathcal{\rightarrow }%
B\left( \mathcal{K}\right) $ be an irreducible representation and $V\in
B\left( \mathcal{H},\mathcal{K}\right) $ be an isometry such that $\left(
\pi _{\lambda _{0}}\circ \pi _{\lambda \lambda _{0}}^{\mathcal{A}}\right)
\left( \pi _{\lambda }^{\mathcal{A}}\left( a\right) \right) =V^{\ast }\rho
_{\lambda }\left( \pi _{\lambda }^{\mathcal{A}}\left( a\right) \right) V$
for all $a\in \mathcal{S}$.$\ $Then $\rho _{\lambda }\circ \pi _{\lambda }^{%
\mathcal{A}}:\mathcal{A\rightarrow }B\left( \mathcal{K}\right) $ is an
irreducible local representation and
\begin{equation*}
\pi \left( a\right) =\left( \pi _{\lambda _{0}}\circ \pi _{\lambda \lambda
_{0}}^{\mathcal{A}}\right) \left( \pi _{\lambda }^{\mathcal{A}}\left(
a\right) \right) =V^{\ast }\rho _{\lambda }\left( \pi _{\lambda }^{\mathcal{A%
}}\left( a\right) \right) V=V^{\ast }\left( \rho _{\lambda }\circ \pi
_{\lambda }^{\mathcal{A}}\right) \left( a\right) V
\end{equation*}%
for all $a\in \mathcal{S}$, whence, since $\mathcal{S}$ separates $\pi $, we
deduce that $\pi $ and $\rho _{\lambda }\circ \pi _{\lambda }^{\mathcal{A}}$
are unitarily equivalent. Then, from the above relation, we deduce that $\pi
_{\lambda _{0}}\circ \pi _{\lambda \lambda _{0}}^{\mathcal{A}}$ and $\rho
_{\lambda }$ are unitarily equivalent. Consequently, $\mathcal{S}_{\lambda }$
separates $\pi _{\lambda _{0}}\circ \pi _{\lambda \lambda _{0}}^{\mathcal{A}%
}.$

$\left( 2\right) \Rightarrow \left( 1\right) $ Let $\rho :\mathcal{%
A\rightarrow }B\left( \mathcal{K}\right) $ be an irreducible local
representation and $V\in B\left( \mathcal{H},\mathcal{K}\right) $ be an
isometry such that $\pi \left( a\right) =V^{\ast }\rho \left( a\right) V$
for all $a\in \mathcal{S}$. There exist $\lambda _{1}\in \Lambda $ with $%
\lambda _{0}\leq \lambda _{1}$ and an irreducible representation $\rho
_{\lambda _{1}}:\mathcal{A}_{\lambda _{1}}\mathcal{\rightarrow }B\left(
\mathcal{K}\right) $ such that $\rho =\rho _{\lambda _{1}}\circ \pi
_{\lambda _{1}}^{\mathcal{A}}$.$\ $ Then
\begin{equation*}
\left( \pi _{\lambda _{0}}\circ \pi _{\lambda _{1}\lambda _{0}}^{\mathcal{A}%
}\right) \left( \pi _{\lambda _{1}}^{\mathcal{A}}\left( a\right) \right)
=\pi \left( a\right) =V^{\ast }\rho \left( a\right) V=V^{\ast }\rho
_{\lambda _{1}}\left( \pi _{\lambda _{1}}^{\mathcal{A}}\left( a\right)
\right) V
\end{equation*}%
for all $a\in \mathcal{S}$, whence, since $\mathcal{S}_{\lambda _{1}}$
separates $\pi _{\lambda _{0}}\circ \pi _{\lambda _{1}\lambda _{0}}^{%
\mathcal{A}},$ we deduce that $\pi _{\lambda _{0}}\circ \pi _{\lambda
_{1}\lambda _{0}}^{\mathcal{A}}$ and $\rho _{\lambda _{1}}$ are unitarily
equivalent. Consequently, $\pi $ and $\rho $ are unitarily equivalent.
Therefore, $\mathcal{S}$ separates $\pi .$
\end{proof}

As in the case of operator systems, we obtain a characterization of the
local boundary representations on Hilbert spaces for a local operator system
in terms of their restrictions on the local operator system. The following
theorem is a local version of \cite[Theorem 2.4.5]{A} .

\begin{theorem}
Let $\mathcal{S}$ be a local operator system in $\mathcal{A}$ such that $%
\mathcal{S}$ generates $\mathcal{A}$. Then an irreducible local
representation $\pi :\mathcal{A\rightarrow }B\left( \mathcal{H}\right) $ is
a local boundary representation for $\mathcal{S}$ if and only if

\begin{enumerate}
\item $\left. \pi \right\vert _{\mathcal{S}}$ is a pure local $\mathcal{CP}$
-map;

\item $\pi $ is a finite local representation for $\mathcal{S}$;

\item $\mathcal{S}$ separates $\pi .$
\end{enumerate}
\end{theorem}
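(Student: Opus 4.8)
The plan is to reduce the statement to Arveson's characterization \cite[Theorem 2.4.5]{A} at the level of the $C^{\ast}$-algebras $\mathcal{A}_{\lambda}$, using that $\mathcal{S}$ generating $\mathcal{A}$ forces $\mathcal{S}_{\lambda}$ to generate $\mathcal{A}_{\lambda}=C^{\ast}(\mathcal{S}_{\lambda})$ for every $\lambda$. The whole section has been engineered so that each of the four notions in play---irreducibility, purity of the restriction, finiteness, and separation---admits a characterization of the shape ``there exist $\lambda_{0}$ and a corresponding $C^{\ast}$-level object, and the property persists for all $\lambda\geq\lambda_{0}$'': these are Proposition~\ref{A} together with Remark~\ref{11}, Proposition~\ref{2}, the finiteness criterion stated just before Lemma~\ref{4}, and Lemma~\ref{4}, respectively. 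The boundary-representation notion is characterized the same way in Proposition~\ref{3}. So the proof is essentially a matter of aligning all of these ``$\exists\,\lambda_{0},\ \forall\,\lambda\geq\lambda_{0}$'' statements at a single index.

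For the forward implication I would start from Proposition~\ref{3}: if $\pi$ is a local boundary representation, there are $\lambda_{0}$ and an irreducible $\pi_{\lambda_{0}}$ with $\pi=\pi_{\lambda_{0}}\circ\pi_{\lambda_{0}}^{\mathcal{A}}$ such that $\pi_{\lambda_{0}}\circ\pi_{\lambda\lambda_{0}}^{\mathcal{A}}$ is a boundary representation for $\mathcal{S}_{\lambda}$ for every $\lambda\geq\lambda_{0}$. Applying \cite[Theorem 2.4.5]{A} to the operator system $\mathcal{S}_{\lambda}$ in $\mathcal{A}_{\lambda}$ for each such $\lambda$ yields simultaneously that the restriction is pure, the representation is finite, and $\mathcal{S}_{\lambda}$ separates it. Reading these off at $\lambda_{0}$ and invoking Proposition~\ref{2} gives (1); invoking the finiteness criterion gives (2); reading separation off at all $\lambda\geq\lambda_{0}$ and invoking Lemma~\ref{4} gives (3). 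This direction needs only the single index $\lambda_{0}$ supplied by Proposition~\ref{3}.

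For the converse I would assemble one common index. Each of (1), (2), (3), together with the irreducibility of $\pi$ via Proposition~\ref{A}, produces its own threshold in $\Lambda$; since $\Lambda$ is directed I would choose $\lambda_{0}$ above all of them, retaining the decomposition $\pi=\pi_{\lambda_{0}}\circ\pi_{\lambda_{0}}^{\mathcal{A}}$ with $\pi_{\lambda_{0}}$ irreducible. The upward-stability clauses---Remark~\ref{11} for irreducibility, the remark following Proposition~\ref{2} for purity, the remark accompanying the finiteness criterion, and the ``$\forall\,\lambda\geq\lambda_{0}$'' clause of Lemma~\ref{4} for separation---then guarantee that for every $\lambda\geq\lambda_{0}$ the representation $\pi_{\lambda_{0}}\circ\pi_{\lambda\lambda_{0}}^{\mathcal{A}}$ of $\mathcal{A}_{\lambda}$ is irreducible, has pure restriction to $\mathcal{S}_{\lambda}$, is finite, and is separated by $\mathcal{S}_{\lambda}$. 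A second application of \cite[Theorem 2.4.5]{A}, now in the reverse direction, shows that each $\pi_{\lambda_{0}}\circ\pi_{\lambda\lambda_{0}}^{\mathcal{A}}$ is a boundary representation for $\mathcal{S}_{\lambda}$, and Proposition~\ref{3} then delivers that $\pi$ is a local boundary representation for $\mathcal{S}$.

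The main obstacle I anticipate is precisely this bookkeeping in the converse: the four characterizations each guarantee only the existence of \emph{some} threshold index together with persistence above it, and these a priori distinct indices must be consolidated into one $\lambda_{0}$ compatible with the fixed irreducible factorization of $\pi$. Verifying that the decomposition $\pi=\pi_{\lambda_{0}}\circ\pi_{\lambda_{0}}^{\mathcal{A}}$ arising from the different results is genuinely the same object---so that ``pure'', ``finite'' and ``separated'' all refer to one and the same family $\{\pi_{\lambda_{0}}\circ\pi_{\lambda\lambda_{0}}^{\mathcal{A}}\}_{\lambda\geq\lambda_{0}}$---is where care is required; the directedness of $\Lambda$ together with the uniqueness of the factorization through the surjection $\pi_{\lambda_{0}}^{\mathcal{A}}$ is what makes this go through.
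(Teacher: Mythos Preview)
Your proposal is correct and follows exactly the approach the paper intends: the paper's own proof is the one-line ``It follows from Propositions~\ref{2} and~\ref{3}, Lemma~\ref{4} and \cite[Theorem 2.4.5]{A},'' and you have simply unpacked this, correctly identifying the need to align the various threshold indices via directedness of $\Lambda$ and to invoke the upward-stability remarks (Remark~\ref{11}, the remark after Proposition~\ref{2}, and the finiteness remark) in the converse direction. Your careful treatment of the bookkeeping in the converse is more explicit than the paper's, but the underlying argument is identical.
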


\begin{proof}
It follows from Propositions \ref{2} and \ref{3}, Lemma \ref{4} and \cite[%
Theorem 2.4.5]{A}.
\end{proof}

\begin{remark}
Since the local boundary representations for a local \textit{\ operator
system }$\mathcal{S}$ \textit{in a Fr\'{e}chet locally }$C^{\ast }$-algebra
on \textit{quantized Fr\'{e}chet domains }are local boundary representations
on Hilbert spaces (Proposition \ref{1}), Theorem 5.11 \cite{Ar} is a
particular case of the above theorem.
\end{remark}

\begin{acknowledgement}
The author thanks to the referees for the proposed comments which improved
the paper.
\end{acknowledgement}

\end{document}